\providecommand{\U}[1]{\protect\rule{.1in}{.1in}}
\newtheorem{theorem}{Theorem}
\newtheorem{claim}[theorem]{Claim}
\newtheorem{lemma}[theorem]{Lemma}
\newtheorem{remark}[theorem]{Remark}
\newenvironment{proof}[1][Proof]{\noindent\textbf{#1.} }{\ \rule{0.5em}{0.5em}}
\begin{document}

\title{\textbf{The miracle of integer eigenvalues}\\
\hfill {\it\normalsize To the memory of Professor A. M. Vershik}}

\author{Richard Kenyon$^{1}$, Maxim Kontsevich$^{2}$, Oleg Ogievetsky$^{3,4,5}$,
\and Cosmin Pohoata$^{9}$, Will Sawin$^{6}$ and Senya Shlosman$^{3,5,7,8}$\\
\\
$^{1}$ Yale University, USA, $^{2}$ IHES, France\\$^{3}$Aix Marseille Univ, Universit\'e de Toulon, \\CNRS, CPT, Marseille, France\\$^{4}$Lebedev Physical Institute, Moscow, Russia,\\$^{5}$Inst. of the Information Transmission Problems, Moscow, Russia,\\$^{6}$Princeton University, USA, $^{7}$BIMSA, Beijing, China,\\$^{8}$Skolkovo Institute of Science and Technology, Moscow, Russia\\$^{9}$Emory University, USA}
\maketitle

\begin{abstract}
For partially ordered sets $X$ we consider the square matrices $M^{X}$ with
rows and columns indexed by linear extensions of the partial order on $X.$
Each entry $\left(  M^{X}\right)  _{PQ}$ is a formal variable defined by a
\textit{pedestal }of the linear order $Q$ with respect to linear order $P.$ We
show that all the eigenvalues of any such matrix $M^{X}$ are $\mathbb{Z}$ -linear
combinations of those variables.

\end{abstract}

\section{The statement of the main result}

Let $X=\left\{  \alpha_{1},...,\alpha_{n}\right\}  $ be a partially ordered
set with the partial order $\preccurlyeq.$ A linear extension $P$ of
$\preccurlyeq$ is a bijection $P:X\rightarrow\left[  1,...,n\right]  $, such
that for any pair $\alpha_{i},\alpha_{j},$ satisfying $\alpha_{i}
\preccurlyeq\alpha_{j}$ we have $P\left(  \alpha_{i}\right)  \leq P\left(
\alpha_{j}\right)  .$

\vskip.2cm Let $P,Q$ be two linear extensions of $\preccurlyeq.$ We call the
node $Q^{-1}\left(  k\right)  \in X$ a $\left(  P,Q\right)  $-disagreement
node (or descent node, following \cite{St}) iff $P\left(  Q^{-1}\left(  k-1\right)  \right)  >P\left(  Q^{-1}\left(
k\right)  \right)  .$
By
definition, the node $Q^{-1}\left(  1\right)  $ is a $\left(  P,Q\right)
$-agreement node. With every pair $P,Q$ we associate the function
$\varepsilon_{PQ}:\left\{  1,...,n-1\right\}  \rightarrow\left\{  0,1\right\}
,$ given by
\begin{equation}
\varepsilon_{PQ}\left(  k\right)  =\left\{
\begin{tabular}
[c]{ll}
$1$ & if $Q^{-1}\left(  k+1\right)  $ is a $\left(  P,Q\right)  $-descent
node,\\
$0$ & otherwise.
\end{tabular}
\ \ \ \ \right.  \label{21}
\end{equation}

Note that for some pairs $\left(  P,Q\right)  \neq$ $\left(  P^{\prime
},Q^{\prime}\right)  $ the functions $\varepsilon_{PQ},$ $\varepsilon
_{P^{\prime}Q^{\prime}}$ can coincide (see the Examples section).

\vskip.2cm To formulate our main result we denote by $\mathcal{E}=\left\{
\varepsilon:\left\{  1,...,n-1\!\right\}  \rightarrow\left\{  0,1\right\}
\!\right\}  $ the set of all $2^{n-1}$ different $\varepsilon$ functions, and we
associate with every $\varepsilon$ a corresponding formal variable
$a_{\varepsilon}.$ For any poset $X$ consider the square matrix $M^{X},$ whose
matrix elements are indexed by the pairs $\left(  P,Q\right)  ,$ and are given
by $\left(  M^{X}\right)  _{PQ}=a_{\varepsilon_{PQ}}.$

\vskip.2cm For example, the poset $\left(  X,\preccurlyeq\right)  $ with three elements
and one relation: $X=\{\{u,v,w\},u<v\}$ has three linear extensions of
$\preccurlyeq$: $u<v<w,\ u<w<v,\ w<u<v.$ Let $P$ be the linear extension
$u<v<w$ and $Q$ -- the linear extension $u<w<v$. We have $\varepsilon
_{PQ}=(0,1)$ since $2$ is not a descent ($u<v$ in both $Q$ and $P$) and
$3$ is a descent ($w<v$ in $Q$ but not in $P$). The matrix $M^{X}$
is
\begin{equation}\label{par31}
\left(
\begin{array}
[c]{ccc}
a_{00} & a_{01} & a_{10}\\
a_{01} & a_{00} & a_{10}\\
a_{01} & a_{10} & a_{00}
\end{array}
\right)  .
\end{equation}

The eigenvalues of this matrix are
$a_{00}-a_{01}$, $a_{00}-a_{10}$
and $a_{00}+a_{01}+a_{10}$, so they are $\mathbb{Z}$-linear combinations of the letters entering the
matrix. One of us (O.O.) conjectured that this holds (the eigenvalues are $\mathbb{Z}$-linear combinations of
the letters entering the matrix $M^X$)
for every poset $X$. Below we present the proof of this conjecture.

\begin{theorem}
For every poset $X$ the matrix $M^{X}$ is non-degenerate, and all its
eigenvalues are linear combinations of the variables $a_{\varepsilon}$ with
integer coefficients.
\end{theorem}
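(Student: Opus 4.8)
The plan is to start from a reformulation of the matrix entries. Since $P$ and $Q$ are bijections $X\to[n]$, the composite $P\circ Q^{-1}$ lies in $S_{n}$, and unwinding the definition shows that $\varepsilon_{PQ}$ is exactly the indicator function of the descent set of $P\circ Q^{-1}$: $\varepsilon_{PQ}(k)=1$ iff $P(Q^{-1}(k))>P(Q^{-1}(k+1))$, i.e.\ iff $k$ is a descent of $P\circ Q^{-1}$. Hence $M^{X}=\sum_{\varepsilon\in\mathcal{E}}a_{\varepsilon}D^{X}_{\varepsilon}$, where $D^{X}_{\varepsilon}$ is the matrix with entries in $\{0,1\}$ given by $(D^{X}_{\varepsilon})_{PQ}=1$ exactly when $\varepsilon_{PQ}=\varepsilon$. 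As each $D^{X}_{\varepsilon}$ has integer entries, every rational eigenvalue of $D^{X}_{\varepsilon}$ is an algebraic integer, hence an integer; so it is enough to show that the matrices $\{D^{X}_{\varepsilon}:\varepsilon\in\mathcal{E}\}$ can be brought simultaneously to upper triangular form over $\mathbb{Q}$. Indeed, in such a form every $D^{X}_{\varepsilon}$ has rational, hence integer, diagonal entries $c^{(j)}_{\varepsilon}\in\mathbb{Z}$, so $M^{X}$ is simultaneously triangular with diagonal entries $\sum_{\varepsilon}c^{(j)}_{\varepsilon}a_{\varepsilon}$, and these are precisely its eigenvalues and are $\mathbb{Z}$-linear in the $a_{\varepsilon}$. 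Writing $\mathcal{A}_{X}\subseteq\mathrm{End}(\mathbb{Q}^{\mathsf{Tot}_{X}})$ for the $\mathbb{Q}$-algebra generated by the $D^{X}_{\varepsilon}$, this amounts to proving $\mathcal{A}_{X}/\mathrm{rad}(\mathcal{A}_{X})\cong\mathbb{Q}^{m}$ for some $m$; and since a $\mathbb{Q}$-subalgebra of a finite-dimensional $\mathbb{Q}$-algebra all of whose simple modules are one-dimensional again has that property --- a faithful module over the subalgebra inherits, from the ambient algebra, a composition series all of whose factors are one-dimensional --- it suffices to realize $\mathcal{A}_{X}$ inside some explicit split algebra with one-dimensional simple modules.

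For the antichain $A_{n}$ one has $\mathsf{Tot}_{A_{n}}=S_{n}$, and $M^{A_{n}}$ is the matrix of left multiplication by $\mathfrak{d}=\sum_{g\in S_{n}}a_{\mathrm{Des}(g)}g$ on $\mathbb{Q} S_{n}$; then $\mathcal{A}_{A_{n}}$ is the regular image of Solomon's descent algebra $\Sigma_{n}$, and the statement we need is the classical structure theorem (Solomon; Garsia--Reutenauer): $\Sigma_{n}\otimes\mathbb{Q}$ has the split commutative semisimple quotient $\mathbb{Q}^{p(n)}$, all its irreducible representations are one-dimensional, and each $D_{\varepsilon}$ acts on them by an integer. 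For a general poset $X$ I would look for the analogue geometrically. Linear extensions of $X$ are exactly the chambers of the braid arrangement lying in the order cone $\mathcal{C}_{X}=\{x:x_{i}<x_{j}\ \text{whenever}\ \alpha_{i}\prec\alpha_{j}\}$, the pedestal $\varepsilon_{PQ}$ records which walls separate the chambers $P$ and $Q$, and $\mathbb{Q}^{\mathsf{Tot}_{X}}$ is naturally a module over a left-regular band attached to $\mathcal{C}_{X}$ (the ordered set partitions whose block order is compatible with $\preccurlyeq$). I would then bring in the representation theory of left-regular bands and of random walks on chambers of hyperplane arrangements (Bidigare--Hanlon--Rockmore, Brown, Saliola, Aguiar--Mahajan): such algebras are split with commutative semisimple quotient indexed by the flats of the arrangement, and the eigenvalue of an element associated to a flat is a subset-type sum of the face weights obtained by M\"obius inversion over the intersection lattice; grouping faces by their pedestal type then turns these into the desired $\mathbb{Z}$-linear combinations of the $a_{\varepsilon}$.

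The step I expect to be the main obstacle is precisely showing that the operators $D^{X}_{\varepsilon}$ genuinely live in such a well-behaved algebra. The difficulty is that $M^{X}$ is not a module action but the \emph{compression} of left multiplication by $\mathfrak{d}$ from $\mathbb{Q} S_{n}$ onto the coordinate subspace $\mathbb{Q}^{\mathsf{Tot}_{X}}$, and for a general $X$ this subspace is neither invariant nor a quotient, so the product of two compressions is not the compression of the product. The real content is therefore to identify the correct band $B_{X}$ and $B_{X}$-module structure on $\mathbb{Q}^{\mathsf{Tot}_{X}}$ under which every $D^{X}_{\varepsilon}$ becomes a $\mathbb{Z}$-combination of face operators, i.e.\ to construct a ``descent algebra of the poset $X$'' and prove it has a split commutative semisimple quotient. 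As a more elementary fallback I would attempt induction on $|X|$, using the poset operations (disjoint union, ordinal sum) together with the decomposition of $\mathsf{Tot}_{X}$ according to which minimal element is placed first: on the block indexed by a fixed minimal element $m$ one has $\varepsilon_{PQ}=(0,\varepsilon_{\bar P\bar Q})$ with $\bar P,\bar Q\in\mathsf{Tot}_{X\setminus m}$, so the inductive hypothesis triangularizes the diagonal blocks, and the remaining issue --- which is where the genuine difficulty sits --- is to order the blocks and change basis so that the parts connecting distinct minimal elements become strictly triangular; this fails for the naive ordering and appears to need the same structural input as the algebraic route.
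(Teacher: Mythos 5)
Your proposal identifies the correct framework but stops short at exactly the point where the paper's proof does its work, and one of your diagnoses of the difficulty is actually wrong. You correctly reduce the theorem to simultaneous triangularization over $\mathbb{Q}$ of the $(0,1)$-matrices $D^{X}_{\varepsilon}$ (the paper's $B_{X,\varepsilon}$), you correctly handle the antichain via Solomon's descent algebra, and you correctly sense that the relevant structure for general $X$ is the face semigroup of the braid arrangement (the paper indeed cites Bidigare--Hanlon--Rockmore and Brown--Diaconis for this). But you then say that ``the real content is to identify the correct band $B_{X}$\dots'' and do not identify it, and you claim that $\mathbb{Q}^{\mathsf{Tot}_X}$ ``is neither invariant nor a quotient,'' which is where you go astray.

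The paper's actual solution has two ingredients you are missing. First, an inclusion--exclusion lemma writes each $B_{X,\varepsilon}$ as a $\mathbb{Z}$-linear combination of the much simpler \emph{filter operators} $M^{X}_{F}$, where $F$ ranges over filters (surjective order-preserving maps $X\to\{1,\dots,r\}$) and $M^{X}_{F}$ is the $0/1$ matrix of the face-semigroup action $P\mapsto FP$ on linear extensions. In other words, one should not try to triangularize the $D^{X}_{\varepsilon}$ head-on inside the ambient descent algebra, as you attempt; one first passes to the filter operators, and only those need to be triangularized. Second --- and this corrects your ``neither invariant nor a quotient'' claim --- for the \emph{filter} operators the coordinate subspace $\mathbb{Q}^{\mathsf{Tot}_X}$ \emph{is} a quotient module of $\mathbb{Q}^{\mathsf{Tot}_{\bar X}}$ (the unordered case): if $P$ extends $\preccurlyeq$ and $F$ is a filter of $(X,\preccurlyeq)$, then $FP$ also extends $\preccurlyeq$, so the complementary coordinate subspace is invariant and $M^{X}_{F}$ is the induced operator on the quotient. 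Iterating this, $\mathbb{Q}^{\mathsf{Tot}_{\bar X}}$ is in turn a quotient of $\mathbb{Q}^{\mathsf{Tour}_{\bar X}}$, the span of all $\binom{n}{2}$-tuples of pairwise comparisons (``tournaments''), on which the filter operators are literal tensor products of $2\times2$ matrices. Each $2\times 2$ factor is one of three matrices simultaneously upper-triangularized by the Fourier transform on two points, so the whole family is simultaneously triangularized with integer diagonal, and this flag descends through both quotients. Your inductive fallback in the last paragraph is indeed as problematic as you suspect; the filter decomposition plus the tournament tensor product is the missing structural input you were looking for.
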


\noindent Here `non-degenerate' means non-degenerate over the field of rational functions in the matrix elements.

\vskip.2cm
The matrices $M^{X}$ were introduced in the paper \cite{OS}. It is proven
there that the row sums $\sum_{Q}\left(  M^{X}\right)  _{PQ}$ do not depend on
the row $P,$ so the matrix $M^{X}$ is `stochastic' (up to a scale), and $\Pi
_{X}\left(  \left\{  a_{\varepsilon}\right\}  \right) : =\sum_{Q}\left(
M^{X}\right)  _{PQ}$ is its main eigenvalue. In \cite{OS} the corresponding
sums are called the `pedestal polynomials'. They enter into the expression for
the generating functions of the monotone functions $f:X\rightarrow\left\{
0,1,2,...\right\}  $ (e.g. the generating function of the number of plane
partitions, spacial partitions, etc.):\newline
\begin{equation}
\sum_{\text{monotone }f:X\rightarrow\left\{  0,1,2,...\right\}  }t^{\sum_{x\in
X}f\left(  x\right)  }=\Pi_{X}\left(  t\right)  \prod_{k=1}^{n}\frac
{1}{1-t^{k}}, \label{23}
\end{equation}
where the polynomial $\Pi_{X}\left(  t\right)  $ is obtained from $\Pi
_{X}\left(  \left\{  a_{\varepsilon}\right\}  \right)  $ by the substitution
\[
a_{\varepsilon}\rightsquigarrow t^{\sum_{k=1}^{n-1}k\varepsilon\left(
k\right)  }.
\]
We put into the Appendix the relevant combinatorial facts about the pedestals
and pedestal polynomials.

\vskip.2cm
Our main tool is the filter semigroup of operators $M_{F}^{X},$ introduced in
the next section. They have appeared first in \cite{BHR, BD}, where their
spectral properties were studied. In fact, part of the proof of Theorem 1 can be obtained by following the proof of Thm
1.2 in \cite{BHR}. We give a shorter and more direct proof.

\vskip.2cm
The next section contains some general facts about posets. It is followed by
the section containing proofs.

\section{The filter semigroup}

At the end of this section we will introduce the filter semigroup. But it is
easier to describe it geometrically, as the face semigroup of a hyperplane
arrangement, so we do this first.

\subsection{Faces}

Consider the central real hyperplane arrangement $A_{n}$ consisting of
hyperplanes $\{H_{ij}:1\leq i<j\leq n\}$ in $\mathbb{R}^{n}$ defined by
$H_{ij}=\left\{  (x_{1},...,x_{n}):x_{i}=x_{j}\right\}  .$ Every open
connected component of the complement $\mathbb{R}^{n}\setminus\left\{  \cup
H_{ij}\right\}  $ is called a chamber. A \textit{cone} is any union of
closures of chambers which is \textit{convex}. Let us introduce the (finite)
set $\mathfrak{O}\left(  n\right)  $ of all different cones thus obtained.

\vskip.2cm
Let a poset $X$ of $n$ elements be given, with a binary relation
$\preccurlyeq$. To every pair $i,j\in X$ which is in the relation
$i\preccurlyeq j$ there corresponds a half-space $K_{ij}=\left\{  x_{i}\leq
x_{j}\right\}  \subset\mathbb{R}^{n}$ (here we assume that $X$ is identified
with $\{1,2,\dots,n\}$ as a plain set, ignoring the order). Consider the cone
\[
A\left(  X,\preccurlyeq\right)  =\left\{
{\displaystyle\bigcap\limits_{i,j:i\preccurlyeq j}}
K_{ij}\right\}  \in\mathfrak{O}\left(  n\right)
\]
where the intersection is taken over all pairs $i,j$ such that $i\preccurlyeq
j.$

\vskip.2cm
The following statements are well-known (and easy to prove), see \cite{B, D,
Sa, St}.

\begin{claim}
The above defined correspondence $\left(  X,\preccurlyeq\right)  \rightarrow
A\left(  X,\preccurlyeq\right)  $ is a one-to-one correspondence between the
set of all partial orders on $\left\{  1,2,...,n\right\}  $ and the set of all
cones $\mathfrak{O}\left(  n\right)  $.
\end{claim}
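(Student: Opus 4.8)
The plan is to establish the bijection by exhibiting both maps and checking they are mutual inverses. Starting from a partial order $\preccurlyeq$ on $\{1,\dots,n\}$, the cone $A(X,\preccurlyeq)=\bigcap_{i\preccurlyeq j}K_{ij}$ is a convex intersection of closed half-spaces each bounded by one of the hyperplanes $H_{ij}$; I would first verify that it is indeed a union of closures of chambers, i.e.\ an element of $\mathfrak{O}(n)$. This holds because a generic point $x$ in the interior of $A(X,\preccurlyeq)$ has all coordinates distinct (the only linear equalities forced are of the form $x_i=x_j$, and these are not forced when $i,j$ are incomparable, while if $i\preccurlyeq j$ the half-space $K_{ij}$ still contains points with $x_i<x_j$), so $x$ lies in some chamber, and by convexity the closure of that chamber lies in $A(X,\preccurlyeq)$. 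Conversely, each chamber is cut out by a total order, hence corresponds to a linear extension of $\preccurlyeq$, and $A(X,\preccurlyeq)$ is exactly the union of the closures of chambers coming from linear extensions.

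Next I would describe the inverse map. Given a cone $C\in\mathfrak{O}(n)$, define a relation $\preccurlyeq_C$ on $\{1,\dots,n\}$ by declaring $i\preccurlyeq_C j$ iff $x_i\le x_j$ holds for every $x\in C$. Reflexivity is immediate, transitivity is immediate, and antisymmetry follows because if both $x_i\le x_j$ and $x_j\le x_i$ held identically on $C$ then $C\subseteq H_{ij}$, contradicting that $C$ is a union of closures of full-dimensional chambers (so $C$ is not contained in any hyperplane). Thus $\preccurlyeq_C$ is a genuine partial order. The key geometric input is that a convex cone which is a union of chamber-closures is the intersection of the half-spaces containing it among the $H_{ij}^{\pm}$: this is just the statement that a closed convex set is the intersection of the closed half-spaces containing it, specialized to the fact that the only relevant bounding hyperplanes here are the $H_{ij}$ (a facet of such a cone lies on some $H_{ij}$ because the cone is a union of chambers of the arrangement $A_n$).

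Finally I would check the two compositions are the identity. Starting from $\preccurlyeq$, forming $A(X,\preccurlyeq)$, and reading off $\preccurlyeq_{A(X,\preccurlyeq)}$: clearly $i\preccurlyeq j\Rightarrow i\preccurlyeq_{A(X,\preccurlyeq)}j$ since $A(X,\preccurlyeq)\subseteq K_{ij}$; the reverse implication is the content of the classical fact that a partial order equals the intersection of its linear extensions (here phrased geometrically: if $i\not\preccurlyeq j$ there is a linear extension of $\preccurlyeq$ placing $j$ before $i$, and the corresponding chamber lies in $A(X,\preccurlyeq)$ but violates $x_i\le x_j$). Conversely, starting from a cone $C$, forming $\preccurlyeq_C$, and forming $A(X,\preccurlyeq_C)=\bigcap_{i\preccurlyeq_C j}K_{ij}$: by definition $C\subseteq K_{ij}$ for each such pair, so $C\subseteq A(X,\preccurlyeq_C)$; for the reverse inclusion one uses that $C$, being a union of chamber-closures and convex, is the intersection of exactly those half-spaces $K_{ij}$ (or their reverses) that contain it, and these are precisely the pairs entering $\preccurlyeq_C$. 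The main obstacle, such as it is, is keeping the bookkeeping of half-spaces versus hyperplanes straight and invoking the right classical fact (intersection of linear extensions equals the partial order) at the right moment; there is no deep difficulty, which is why the claim is flagged as well-known.
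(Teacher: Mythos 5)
The paper does not prove this claim at all: it flags it as well known, cites \cite{B, D, Sa, St}, and offers only the $n=4$ illustration of Figure~\ref{figure1}, so there is no in-paper argument to compare against. Your proof is correct and follows the standard route: construct the inverse map $C\mapsto\preccurlyeq_C$ (with $i\preccurlyeq_C j$ iff $x_i\le x_j$ on all of $C$), verify that $\preccurlyeq_C$ is a partial order, and check that the two compositions are the identity. The two ingredients you call on are exactly the right ones: a full-dimensional convex union of chamber closures is the intersection of the arrangement half-spaces that contain it (so $C=A(X,\preccurlyeq_C)$), and a partial order is the intersection of its linear extensions (so $\preccurlyeq_{A(X,\preccurlyeq)}$ recovers $\preccurlyeq$). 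Two small wording points, neither a real gap. First, the reason the chamber containing an interior point of $A(X,\preccurlyeq)$ lies entirely inside $A(X,\preccurlyeq)$ is not convexity: each $K_{ij}$ either contains a given chamber or is disjoint from it (the sign of $x_i-x_j$ is constant on a chamber), so the intersection $\bigcap K_{ij}$ is automatically a union of chamber closures; convexity alone would not rule out a chamber straddling the boundary. Second, when you argue antisymmetry by saying $C\not\subseteq H_{ij}$, it is worth stating explicitly that every element of $\mathfrak{O}(n)$ is a nonempty union of closures of chambers and hence full-dimensional; this is also the fact you implicitly use when asserting that the bounding hyperplanes of $C$ are among the $H_{ij}$.
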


We present an illustration of this claim for $n=4$.
$$\vspace{-1.4cm}$$

\begin{figure}[th]
\vspace{-.0cm} \centering
\includegraphics[scale=0.4]{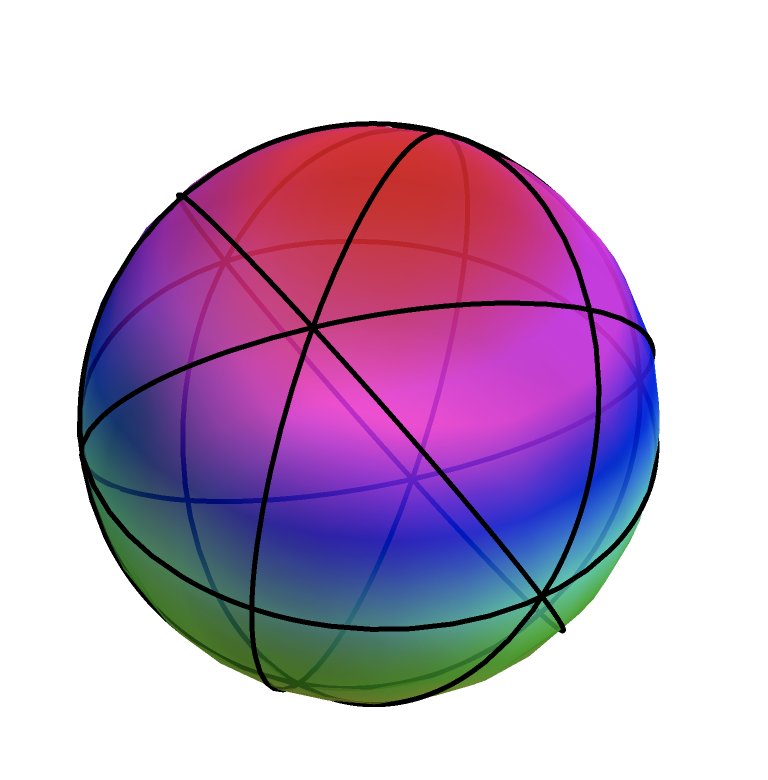}\caption{\footnotesize The central real hyperplane arrangement $A_{4}$ in $\mathbb{R}^{4},$ projected
to $\mathbb{R}^{3}$ along the line $x=y=z=t$ and intersected with the sphere
$\mathbb{S}^{2}\subset\mathbb{R}^{3}.$ It is a partition of $\mathbb{S}^{2}$
into 24 equal triangles, each with the angles $\left(  \frac{\pi}{2},\frac
{\pi}{3},\frac{\pi}{3}\right)  .$ The types of convex unions of the triangles
are: the sphere, the hemisphere, the region between two great semicircles, an
elementary triangle -- or e-triangle, a pair of e-triangles with a common side,
a triangle made from three e-triangles, a `square' formed by four e-triangles with
a common $\frac{\pi}{2}$-vertex, a triangle made from a `square' and a fifth
adjacent e-triange, a triangle formed by six e-triangles with a common
$\frac{\pi}{3}$-vertex. The number of corresponding convex shapes are 1, 12,
60, 24, 36, 48, 6, 24, 8, with total being 219. This is precisely the number
of partial orders on the set of four distinct elements, see the sequence A001035
in OEIS \cite{Sl}.}
\label{}
\end{figure}

Let $f^{\prime},f^{\prime\prime}$ be two faces in $A(X)=A\left(
X,\preccurlyeq\right)  .$ (It is allowed that one or both of them are in fact
chambers, i.e. faces of highest dimension). Define the face $f=f^{\prime
\prime}\left(  f^{\prime}\right)  \in A\left(  X\right)  $ -- or the
\textit{face-product} $f^{\prime\prime}f^{\prime}$ -- by the following
procedure: choose points $x^{\prime}\in f^{\prime},$ $x^{\prime\prime}\in
f^{\prime\prime}$ in general position and let $s_{x^{\prime}x^{\prime\prime}
}:\left[  0,1\right]  \rightarrow\mathbb{R}^{n}$ be a linear segment,
$s_{x^{\prime}x^{\prime\prime}}\left(  0\right)  =x^{\prime},$ $s_{x^{\prime
}x^{\prime\prime}}\left(  1\right)  =x^{\prime\prime}.$ Consider the face
$f\in A\left(  X\right)  $ which contains all the points $s_{x^{\prime
}x^{\prime\prime}}\left(  1-\varepsilon\right)  $ of our segment for
$\varepsilon>0$ small enough. Such a face does exist due to the convexity of
$A(X).$ By definition, $f^{\prime\prime}\left(  f^{\prime}\right)  =f.$ Note
that if $f^{\prime\prime}$ is a chamber then $f^{\prime\prime}f^{\prime
}=f^{\prime\prime}.$

\vskip.2cm
The face-product is associative. We mention for completeness that the
semigroups $A\left(  X,\preccurlyeq\right)  $ are what are called
\textit{left-regular bands, }see \cite{Sa}:

\begin{claim}
For every choice of faces $f,g,h\in A\left(  X,\preccurlyeq\right)  $ we have
\[
f\left(  gh\right)  =\left(  fg\right)  h,
\]
\[
ff=f,\ \ fgf=fg.
\]

\end{claim}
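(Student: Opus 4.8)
The plan is to encode every face by a \emph{sign vector} and to reduce all three identities to a short verification on such vectors. For a face $f$ of $A_{n}$ contained in the cone $A(X)=A(X,\preccurlyeq)$ and a pair $i<j$, let $\sigma_{ij}(f)\in\{-,0,+\}$ be the common sign of $x_{i}-x_{j}$ over all $x$ in the relative interior of $f$; this is well defined because $f$ lies in exactly one of the three regions $\{x_{i}<x_{j}\}$, $\{x_{i}=x_{j}\}$, $\{x_{i}>x_{j}\}$. It is standard (see \cite{B, Sa, St}) that $f$ is then exactly the set $\{x:\operatorname{sign}(x_{i}-x_{j})=\sigma_{ij}(f)\text{ for all }i<j\}$, so the assignment $f\mapsto\sigma(f):=(\sigma_{ij}(f))_{i<j}$ is injective on faces. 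It therefore suffices to express the sign vector of a face-product in terms of those of its factors.

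The key step is to show that for faces $f',f''\in A(X)$ one has, for every $i<j$,
\[
\sigma_{ij}\bigl(f''(f')\bigr)=
\begin{cases}
\sigma_{ij}(f'') & \text{if } \sigma_{ij}(f'')\neq 0,\\[.2em]
\sigma_{ij}(f') & \text{if } \sigma_{ij}(f'')= 0 .
\end{cases}
\]
For this I would choose $x'\in f'$ and $x''\in f''$ \emph{generic within their faces}, so that $x'_{i}=x'_{j}$ holds exactly when $\sigma_{ij}(f')=0$, and similarly for $x''$. Then, with $x_{\varepsilon}:=s_{x'x''}(1-\varepsilon)=\varepsilon x'+(1-\varepsilon)x''$, we have $(x_{\varepsilon})_{i}-(x_{\varepsilon})_{j}=(1-\varepsilon)(x''_{i}-x''_{j})+\varepsilon(x'_{i}-x'_{j})$, whose sign for all sufficiently small $\varepsilon>0$ equals $\sigma_{ij}(f'')$ when $x''_{i}\neq x''_{j}$ and equals $\sigma_{ij}(f')$ when $x''_{i}=x''_{j}$. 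Fixing a single $\varepsilon>0$ small enough to work simultaneously for the finitely many pairs $i<j$, the point $x_{\varepsilon}$ lies in the unique face carrying the displayed sign vector; by definition this face is $f''(f')$, and the formula also shows that the face-product does not depend on the choices of $x'$ and $x''$. (By convexity of $A(X)$, as noted in the text, this face belongs to $A(X)$.)

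It remains to transfer the three identities to the sign-vector side. Define on $\{-,0,+\}$ the binary operation $a\ast b:=a$ if $a\neq 0$ and $a\ast b:=b$ if $a=0$; by the key step, for any two faces $g,h\in A(X)$ one has $\sigma_{ij}(gh)=\sigma_{ij}(g)\ast\sigma_{ij}(h)$ for every $i<j$. Since $\sigma$ is injective, the Claim reduces to proving, for all $a,b,c\in\{-,0,+\}$,
\[
a\ast(b\ast c)=(a\ast b)\ast c,\qquad a\ast a=a,\qquad (a\ast b)\ast a=a\ast b .
\]
The middle identity is immediate. For associativity: if $a\neq 0$ both sides equal $a$, and if $a=0$ both sides equal $b\ast c$. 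For the third: if $a\neq 0$ then $a\ast b=a$ and $(a\ast b)\ast a=a\ast a=a=a\ast b$, while if $a=0$ then $a\ast b=b$ and $(a\ast b)\ast a=b\ast 0=b=a\ast b$, using $b\ast 0=b$ in every case. Applying these coordinatewise gives $f(gh)=(fg)h$, $ff=f$, and $fgf=(fg)f=fg$, which is the Claim. I do not expect a serious obstacle: the only delicate point is the genericity in the key step — the points $x',x''$ must be taken generic \emph{relative to} the faces $f',f''$ (not merely generic in $\mathbb{R}^{n}$), so that the vanishing patterns of $x'_{i}-x'_{j}$ and $x''_{i}-x''_{j}$ detect exactly the $0$-entries of $\sigma(f')$ and $\sigma(f'')$, after which one selects a single $\varepsilon$ valid for all pairs at once; everything else is formal.
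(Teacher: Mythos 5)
The paper does not actually supply a proof of this Claim; it cites \cite{B, D, Sa, St} and explicitly says ``We do not give here the proofs as we are not using these relations.'' So there is nothing internal to compare against. Your argument is correct, and it is essentially the standard one from the cited literature: encode each face by its covector $\sigma(f)=(\sigma_{ij}(f))_{i<j}\in\{-,0,+\}^{\binom{n}{2}}$, show that the Tits face-product acts coordinatewise as the operation $a\ast b=a$ if $a\neq0$, else $b$, and then reduce the three identities to a finite verification on $\{-,0,+\}$, which you carry out correctly. Two small points worth noting. First, as the filter--face dictionary in the paper makes explicit, the faces here are relatively open cells cut out by a mixture of equalities $x_i=x_j$ and strict inequalities; consequently any $x'\in f'$ already satisfies $x'_i=x'_j$ exactly when $\sigma_{ij}(f')=0$, so the extra genericity you invoke within the face is automatic --- the ``general position'' in the paper's definition is only there to guarantee that the segment $s_{x'x''}$ does not accidentally run along a lower-dimensional stratum for an interval of parameter values, and your formula for $(x_\varepsilon)_i-(x_\varepsilon)_j$ handles that anyway. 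Second, you correctly flag, via convexity of $A(X)$, that the product face lies again in $A(X)$; this is the only place where one must not work purely in the ambient arrangement $A_n$, and it is good that you made it explicit, since the covector computation itself is blind to the cone constraint. In short: a correct proof along the standard covector route, which the paper itself chose to omit.
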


We do not give here the proofs as we are not using these relations.

\subsection{Filters}

Let $F$ be a filter on $X$ of rank $k,$ i.e. a surjective map $F:X\rightarrow
\left\{  1,...,k\right\}  ,$ preserving the partial order, and let

\[
\left\{  b_{1},...,b_{j_{1}}\right\}  ,\left\{  b_{j_{1}+1},...,b_{j_{2}
}\right\}  ,...,\left\{  b_{j_{k-1}+1},...,b_{j_{k}}\right\}  \subset X
\]

\vskip .1cm
\noindent be its `floors':\newline
\[
\left\{  b_{j_{r-1}+1},...,b_{j_{r}}\right\}  =F^{-1}\left(  r\right)
,\ r=1,...,k.
\]

\vskip .1cm
\noindent Consider the face $f_{F}\in A\left(  X,\preccurlyeq\right)  ,$ defined by the
equations

\[
x_{b_{j_{r-1}+1}}=...=x_{b_{j_{r}}},\ r=1,...,k
\]

\vskip .1cm
\noindent
and inequalities

\[
x_{b_{j_{1}}}<x_{b_{j_{2}}}<...<x_{b_{j_{k}}}.
\]

\vskip .1cm
\noindent (More precisely, we write an equation for every floor of $F$ which contains at
least two elements of $X.$) This is a one-to-one correspondence between faces
and filters. The filters of the highest rank $n,$ i.e. the linear extensions
of $\preccurlyeq,$ correspond to the chambers.

\vskip.2cm
The corresponding filter-product looks as follows. For $F^{\prime},$
$F^{\prime\prime}$ being two filters of $X,$ the filter $F=F^{\prime\prime
}F^{\prime}$ on $X$ is uniquely defined by the following properties:

\begin{itemize}
\item For $u,v$ with $F^{\prime\prime}\left(  u\right)  <F^{\prime\prime
}\left(  v\right)  \ $we have $F\left(  u\right)  <F\left(  v\right)  .$

\item For $u,v$ with $F^{\prime\prime}\left(  u\right)  =F^{\prime\prime
}\left(  v\right)  \ $we have $F\left(  u\right)  <F\left(  v\right)  $ iff
$F^{\prime}\left(  u\right)  <F^{\prime}\left(  v\right)  .$\newline
\end{itemize}

Indeed, let $f^{\prime},f^{\prime\prime}$ be the two faces, corresponding to
the filters $F^{\prime},F^{\prime\prime},$ and the general position points
$x^{\prime},x^{\prime\prime}$ belong to corresponding faces.

\vskip.2cm
The fact that $F^{\prime\prime}\left(  u\right)  <F^{\prime\prime}\left(
v\right)  $ means that $x_{u}^{\prime\prime}<x_{v}^{\prime\prime}.$ But the
point $s_{x^{\prime}x^{\prime\prime}}\left(  1-\varepsilon\right)  $ is close
to the point $x^{\prime\prime},$ therefore $\left[  s_{x^{\prime}
x^{\prime\prime}}\left(  1-\varepsilon\right)  \right]  _{u}<\left[
s_{x^{\prime}x^{\prime\prime}}\left(  1-\varepsilon\right)  \right]  _{v}$ for
all $\varepsilon$ small enough.

\vskip.2cm
The fact that $F^{\prime\prime}\left(  u\right)  =F^{\prime\prime}\left(
v\right)  \ $while $F^{\prime}\left(  u\right)  <F^{\prime}\left(  v\right)  $
means that $x_{u}^{\prime\prime}=x_{v}^{\prime\prime}$ while $x_{u}^{\prime
}<x_{v}^{\prime}.$ Since the map $s_{x^{\prime}x^{\prime\prime}}:\left[
0,1\right]  \rightarrow\mathbb{R}^{n}$ is linear, for any $t<1$ we have
$\left[  s_{x^{\prime}x^{\prime\prime}}\left(  t\right)  \right]  _{u}<\left[
s_{x^{\prime}x^{\prime\prime}}\left(  t\right)  \right]  _{v}.$\bigskip

Let $F$ be a filter on $X$, and $P$ is some filter of rank $n$, i.e. a linear
order on $X.$ Then the filter $FP$ is again a filter of rank $n.$ Consider the
square matrix $M_{F}^{X}=\left\Vert \left(  M_{F}^{X}\right)  _{P,Q}
\right\Vert $ where $P,Q$ are linear orders on $X:$
\[
\left(  M_{F}^{X}\right)  _{P,Q}=\left\{
\begin{tabular}
[c]{ll}
$1$ & if $Q=FP$\\
$0$ & if $Q\neq FP$
\end{tabular}
\ \ \right.  .
\]
The operators $M_{F}^{X}$ play a central role in our proof.\newline

Examples of the operators $M_{F}^{X}$ are given in the Examples section below.

\section{Proof of the main result}

The plan of the proof is the following:

\begin{enumerate}
\item We will show that the matrix $M^{X}$ can be written as a linear
combination of $M_{F}^{X}$-s with integer monomial coefficients.

\item We will show that all $M_{F}^{X}$-s can be made upper-triangular via
conjugation with the \textbf{same }matrix, and the resulting upper-triangular
matrices have integer entries on the diagonal.
\end{enumerate}

\subsection{The filter decomposition}

Let us rewrite $M^{X}$ as the sum over all $2^{n-1}$ functions $\varepsilon
:\left\{  1,...,n-1\right\}  \rightarrow\left\{  0,1\right\}  $:
\begin{equation}
M^{X}=\sum_{\varepsilon}a_{\varepsilon}B_{X,\varepsilon}, \label{bmu}
\end{equation}
where the entries of each matrix $B_{X,\varepsilon}$ are $0$ or $1.$

\vskip.2cm
For every function $\varepsilon$ we define the number $r\left(  \varepsilon
\right)  =1+\sum_{j=1}^{n-1}\varepsilon\left(  j\right)  ,$ and we partition
the segment $\left\{  1,...,n\right\}  $ into $r\left(  \varepsilon\right)  $
consecutive segments
\begin{align*}
\left\{  1,...,n\right\}   &  =\left\{  1,...,c_{1}\right\} \\
&  \cup\left\{  c_{1}+1,...,c_{1}+c_{2}\right\} \\
&  \cup\left\{  c_{1}+c_{2}+1,...,c_{1}+c_{2}+c_{3}\right\}  \cup...\\
&  \cup\left\{  c_{1}+...+c_{r\left(  \varepsilon\right)  }+1,...,n\right\}  ,
\end{align*}
where the values $c_{1}+1,c_{1}+c_{2}+1,...,c_{1}+...+c_{r\left(
\varepsilon\right)  }+1$ are all the points where the function $\varepsilon$
takes value $1.$

For $c_{1},...,c_{r}$ being integers summing
up to $n$ we denote by $\mathcal{F}_{c_{1},...,c_{r}}$ the set of all
filters $F:X\rightarrow\left[  1,2,...,r\right]  $ such that $\left\vert
F^{-1}\left(  i\right)  \right\vert =c_{i}$ for all $i=1,...,r.$

\begin{lemma}
Suppose that the matrix $B_{X,\varepsilon}\neq0,$ and the function
$\varepsilon$ has the parameters $r$ and $c_{1},...,c_{r}$. Then the following
inclusion-exclusion identity holds:
\begin{align}
B_{X,\varepsilon}  &  =\sum_{F\in\mathcal{F}_{c_{1},...,c_{r}}}M_{F}
^{X}-\left[  \sum_{\substack{F\in\mathcal{F}_{c_{1}+c_{2},c_{3},...,c_{r}}
\cup\\\cup\mathcal{F}_{c_{1},c_{2}+c_{3},...,c_{r}}\cup...}}M_{F}^{X}\right]
\label{15}\\
&  +\left[  \sum_{\substack{F\in\mathcal{F}_{c_{1}+c_{2}+c_{3},c_{4}
,...,c_{r}}\cup\\\cup\mathcal{F}_{c_{1}+c_{2},c_{3}+c_{4},...,c_{r}}\cup
...}}M_{F}^{X}\right]  -...\nonumber
\end{align}
where the sums are taken over all possible mergers of neighboring indices
$c_{i},$ and the signs are $\left(  -1\right)  ^{\#mergers}.$
\end{lemma}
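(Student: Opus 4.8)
The plan is to check the identity \eqref{15} entrywise and watch it collapse to a single application of M\"obius inversion on a Boolean lattice. Encode a composition $\gamma=(d_{1},\dots,d_{s})$ of $n$ by its partial-sum set $S(\gamma)=\{d_{1},\ d_{1}+d_{2},\ \dots,\ d_{1}+\dots+d_{s-1}\}\subseteq\{1,\dots,n-1\}$; this is a bijection between compositions of $n$ and subsets of $\{1,\dots,n-1\}$, under which merging two neighbouring parts of $\gamma$ deletes one element of $S(\gamma)$. In this language the composition $(c_{1},\dots,c_{r})$ attached to $\varepsilon$ is the one with $S(c_{1},\dots,c_{r})=D(\varepsilon):=\{k:\varepsilon(k)=1\}$, so $r-1=|D(\varepsilon)|$. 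For $P,Q\in\mathsf{Tot}_{X}$ write $E(P,Q)=\{k:\varepsilon_{PQ}(k)=1\}$ for the set of disagreement positions of the pair, so that $(B_{X,\varepsilon})_{PQ}=1$ exactly when $E(P,Q)=D(\varepsilon)$.

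The key step is to identify the matrix $\sum_{F\in\mathcal{F}_{\gamma}}M_{F}^{X}$: I claim that for every composition $\gamma$ of $n$ and all $P,Q\in\mathsf{Tot}_{X}$,
\[
\Big(\sum_{F\in\mathcal{F}_{\gamma}}M_{F}^{X}\Big)_{PQ}=\begin{cases}1,&\text{if }E(P,Q)\subseteq S(\gamma),\\[.2em]0,&\text{otherwise.}\end{cases}
\]
Since $(M_{F}^{X})_{PQ}=1$ iff $Q=FP$, this amounts to counting the filters $F$ of floor-type $\gamma$ with $FP=Q$. Unwinding the filter-product rule, $FP$ lists first the elements of the floor $F^{-1}(1)$ sorted by $P$, then those of $F^{-1}(2)$, and so on; hence $FP=Q$ forces $F^{-1}(i)$ to be the $i$-th consecutive block, of size $d_{i}$, in the word $Q^{-1}(1)\dots Q^{-1}(n)$, which determines $F$ uniquely from $\gamma$ and $Q$. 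This candidate $F$ is automatically a filter: if $u\preccurlyeq v$ then $Q(u)<Q(v)$ because $Q$ extends $\preccurlyeq$, so $u$ sits in an earlier or equal block, i.e. $F(u)\le F(v)$. Finally $FP=Q$ holds for this $F$ iff inside each block the $P$-order agrees with the $Q$-order, i.e. iff no disagreement position of $(P,Q)$ lies strictly inside a block, i.e. iff $E(P,Q)\subseteq S(\gamma)$. So the number of admissible $F$ is $1$ or $0$ according to whether $E(P,Q)\subseteq S(\gamma)$ or not, as claimed; note that this local count is insensitive to the poset $X$.

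Plugging this into \eqref{15}: the compositions occurring on its right-hand side are exactly the $\gamma$ with $S(\gamma)\subseteq D(\varepsilon)$, each reached by $(r-1)-|S(\gamma)|$ mergers and hence weighted by $(-1)^{(r-1)-|S(\gamma)|}$. Writing $T=S(\gamma)$ and $E=E(P,Q)$, the $(P,Q)$-entry of the right-hand side of \eqref{15} equals
\[
\sum_{T\subseteq D(\varepsilon)}(-1)^{(r-1)-|T|}\,[\,E\subseteq T\,]=\sum_{E\subseteq T\subseteq D(\varepsilon)}(-1)^{(r-1)-|T|}.
\]
If $E\not\subseteq D(\varepsilon)$ this is an empty sum, equal to $0$, and indeed $E\neq D(\varepsilon)$ so $(B_{X,\varepsilon})_{PQ}=0$. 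If $E\subseteq D(\varepsilon)$, put $T=E\sqcup T'$ with $T'\subseteq D(\varepsilon)\setminus E$; the sum becomes $(-1)^{(r-1)-|E|}\sum_{T'\subseteq D(\varepsilon)\setminus E}(-1)^{|T'|}$, which is $0$ unless $D(\varepsilon)\setminus E=\varnothing$, i.e. $E=D(\varepsilon)$, in which case it equals $1$. Either way this agrees with $(B_{X,\varepsilon})_{PQ}$, which proves \eqref{15}.

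The only part carrying real content is the displayed identity for $\sum_{F\in\mathcal{F}_{\gamma}}M_{F}^{X}$ — one must translate the definition of the face/filter product $FP$ into the ``sort every floor by $P$'' recipe and then match the floor boundaries of $\gamma$ with the disagreement indices of $(P,Q)$; the observation that the unique candidate $F$ is always order-preserving is precisely what decouples this count from $X$. Everything after that is the textbook relation $\sum_{T'\subseteq S}(-1)^{|T'|}=[\,S=\varnothing\,]$. (The hypothesis $B_{X,\varepsilon}\neq 0$ is in fact not needed: \eqref{15} holds for every $\varepsilon\in\mathcal{E}$; it is merely the only case that enters the proof of the Theorem.)
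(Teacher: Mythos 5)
Your proof is correct and follows essentially the same approach as the paper's: identify the $(P,Q)$-entry of $\sum_{F\in\mathcal{F}_\gamma}M_F^X$ as the indicator of $E(P,Q)\subseteq S(\gamma)$ and then close the argument by inclusion--exclusion on the Boolean lattice of subsets of $D(\varepsilon)$. You merely spell out more explicitly what the paper leaves as a verbal sketch -- in particular the uniqueness of the filter $F\in\mathcal{F}_\gamma$ with $FP=Q$ and the fact that the candidate blocks of $Q$ always form a genuine filter, which is what makes the entry exactly $0$ or $1$.
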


\begin{proof}
Indeed, if we take an order $Q$ from the row $P$ which appears in the lhs,
then it agrees with $P$ over the first $c_{1}-1$ locations, then it disagrees
once, then it agrees again over next $c_{2}-1$ locations, then disagrees once
again, etc. But an order $Q$ from the row $P$ which appears in the rhs and
corresponds to the first sum in $\left(  \ref{15}\right)  ,$ agrees with $P$
over the first $c_{1}-1$ locations, then it \textbf{agrees or disagrees} once,
then it agrees again over next $c_{2}-1$ locations, then \textbf{agrees or
disagrees} once again, etc. Therefore we have to remove all these $Q$-s which
agrees with $P$ over the first $c_{1}-1$ locations, then \textbf{agrees }once
again, then agrees also over next $c_{2}-1$ locations, etc.

\vskip.2cm
See the Examples section for some $M_{F}^{X}$ operators.
\end{proof}

\subsection{Conjugation of $M_{F}^{X}$-s to upper-triangular}

Let $X=\left\{  \alpha_{1},...,\alpha_{n}\right\}  $ be a poset
with the partial order $\preccurlyeq.$  We denote by $\mathsf{{Tot}}_{X}$ the set of all
total orders extending $\preccurlyeq$.
Our matrices $M_{F}^{X}$ are of the size $\left\vert \mathsf{{Tot}}
_{X}\right\vert \times\left\vert \mathsf{{Tot}}_{X}\right\vert $. Let us now
abolish all order relations on $X,$ getting the poset $\bar{X}$ with
$\left\vert \mathsf{{Tot}}_{\bar{X}}\right\vert =n!\ .$ Of course, $M_{F}^{X}$
is a submatrix of $M_{F}^{\bar{X}}.$ Imagine (after reindexing) that it is an
upper-left submatrix. We claim that to the right of this submatrix all matrix
elements of $M_{F}^{\bar{X}}$ are zero, and so $M_{F}^{X}$ is a block of
$M_{F}^{\bar{X}}$. Indeed, each row of $M_{F}^{\bar{X}}$ has exactly one $1,$
and the rest are $0$-s. But each row of $M_{F}^{X}$ already has one $1.$ So it
is sufficient to know that the spectrum of $M_{F}^{\bar{X}}$ consists of integers.

\vskip.2cm In what follows, the initial poset $X$ will not appear any more,
and we will deal only with `totally unordered' poset $\bar{X}.$ The fact that
the matrices $M_{F}^{\bar{X}}$
can be conjugated simultaneously to upper-triangular ones
can be deduced from the
results of the papers \cite{BHR, BD}. We give a shorter and more direct proof.

\vskip.2cm Let us consider an even bigger matrix, $N_{F}^{\bar{X}}$, of size
$2^{n\left(  n-1\right)  /2}.$ Here $F$ is a filter on $X,$ while the rows and
columns of $N_{F}^{\bar{X}}$ are indexed by \textit{tournaments }between the
$n$ entries of $\bar{X}$. A \textit{tournament} is an assignment of an order
$\preccurlyeq$ to each pair $i\neq j$ of the elements of the set $\bar{X},$
independently for each pair. If we have a tournament $\preccurlyeq$ and a
filter $F$ on $\bar{X},$ then we define a new tournament $\preccurlyeq_{F}$ by
the rule:

\begin{enumerate}
\item If $F\left(  i\right)  =F\left(  j\right)  $ then $i\preccurlyeq_{F}j$
iff $i\preccurlyeq j,$

\item If $F\left(  i\right)  <F\left(  j\right)  $ then $i\preccurlyeq_{F}j$.
\end{enumerate}

We define $N_{F}^{\bar{X}}$ by
\[
\left(  N_{F}^{\bar{X}}\right)  _{\preccurlyeq\preccurlyeq^{\prime}}=\left\{
\begin{tabular}
[c]{ll}
$1$ & if $\preccurlyeq^{\prime}=\ \preccurlyeq_{F}$\\
$0$ & if $\preccurlyeq^{\prime}\not =\ \preccurlyeq_{F}$
\end{tabular}
\ \ \ \right.
\]

Any linear order defines a tournament in an obvious way, so our matrices
$M_{F}^{\bar{X}}$ are blocks of $N_{F}^{\bar{X}}$-s$,$ and it is sufficient to
study $N_{F}^{\bar{X}}$-s. \newline

The key observation now is the fact that $N_{F}^{\bar{X}}$ is a tensor product
of $n\left(  n-1\right)  /2$ two-by-two matrices, corresponding to all pairs
$\left(  i,j\right)  ,$ since the tournament orders $\preccurlyeq$ can be
assigned to the pairs independently. And since the tensor product of upper
triangular matrices is upper triangular, it is sufficient to check our claim
just for the filters and tournaments in the case $n=\left\vert \bar
{X}\right\vert =2.$

\vskip.2cm
The two-element no-order set $\bar{X}=\left\{  1,2\right\}  $ carries three
different filters and has two possible tournaments. The three two-by-two
matrices $N_{F}^{\bar{X}}$-s are $N_{1}:=\left(
\begin{array}
[c]{cc}
1 & 0\\
1 & 0
\end{array}
\right)  ,N_{2}:=\left(
\begin{array}
[c]{cc}
1 & 0\\
0 & 1
\end{array}
\right)  ,$ and $N_{3}:=\left(
\begin{array}
[c]{cc}
0 & 1\\
0 & 1
\end{array}
\right)  .$ \textbf{ }Conjugating them by the discrete Fourier transform
matrix $U=\frac{1}{\sqrt{2}}\left(
\begin{array}
[c]{cc}
1 & 1\\
1 & -1
\end{array}
\right)  $ brings them to the triple of upper triangular matrices:
$UN_{1}U^{-1}=\left(
\begin{array}
[c]{cc}
1 & 1\\
0 & 0
\end{array}
\right)  ,UN_{2}U^{-1}=\left(
\begin{array}
[c]{cc}
1 & 0\\
0 & 1
\end{array}
\right)  ,$ and $UN_{3}U^{-1}=\left(
\begin{array}
[c]{cc}
1 & -1\\
0 & 0
\end{array}
\right)  .$ Extending the conjugation through the tensor product finishes the proof.

\begin{remark}
Recall the definition (\ref{bmu}) of the matrices $B_{X,\varepsilon}$: for a poset $X$,
the set of (0,1)-valued matrices $\{B_{X,\varepsilon}\}_{\varepsilon
\in\{0,1\}^{\{1,\dots,n-1\}}}$ is defined by $M^{X}=\sum_{\varepsilon
}a_{\varepsilon}B_{X,\varepsilon}$. Let $\mathcal{L}(X)$ be the Lie algebra
generated by the matrices $\{B_{X,\varepsilon}\}$. The proof shows that the
Lie algebra $\mathcal{L}(X)$ is solvable.
\end{remark}

\begin{remark}
Let us denote by $\Phi_{T}$ the algebra of functions on the set $\mathsf{Tour}
_{\bar{X}}$ of tournaments considered as the set of vertices of the
$n(n-1)/2$-dimensional cube in $\mathbb{R}^{n(n-1)/2}$. This algebra carries
an increasing filtration by subspaces
\[
0\subset\Phi_{T}^{\leq0}\subset\Phi_{T}^{\leq1}\subset\dots\subset\Phi
_{T}^{\leq{\frac{n(n-1)}{2}}}=\Phi_{T}
\]
consisting of restrictions of polynomials of degree $\leq0,\leq1,\dots$ to the
vertices of the cube. This filtration is \textbf{strictly multiplicative} in
the sense that
\[
\Phi_{T}^{\leq k}=\underbrace{\Phi_{T}^{\leq1}\cdot\dots\cdot\Phi_{T}^{\leq1}
}_{k\text{ times}}\,.
\]
Our considerations imply that all operators $N_{F}^{\bar{X}}$ preserve this
filtration, and commute with each other on the associated graded space
$\oplus_{k}\Phi_{T}^{\leq k}/\Phi_{T}^{\leq k-1}$.

\vskip.2cm
Restricting functions from $\Phi_{T}$ to the subset $\mathsf{Tot}_{X}
\subset\mathsf{Tour}_{\bar{X}}$ we obtain again a strictly multiplicative
filtration on the algebra $\Phi_{X}:=\mathbb{R}^{\mathsf{Tot}_{X}}$ of
functions on $\mathsf{Tot}_{X}$, preserved by all operators $M_{F}^{X}$ where
$F$ runs through filters on the poset $X$.
\end{remark}

\section{Appendices}

\subsection{Pedestals}

\vskip.2cm Let again $X$ be a finite poset with the partial order
$\preccurlyeq,$ and $P,Q$ be a pair of linear orders on $X,$ consistent with
$\preccurlyeq.$ We define the function $q_{PQ}$ on $X$ by
\begin{equation}
q_{PQ}\left(  Q^{-1}\left(  k\right)  \right)  =\#\left\{  l:l\leq
k,Q^{-1}\left(  l\right)  \text{ is a }\left(  P,Q\right)  \text{-descent
node}\right\}  . \label{13}
\end{equation}
Clearly, the function $q_{PQ}$ is non-decreasing on $X,$ and $q_{PQ}\left(
Q^{-1}\left(  1\right)  \right)  =0.$ It is called the pedestal of $Q$ with
respect to $P.$

\vskip.2cm
For example, let $X$ be a $3\times2$ Young diagram, and
\[
P=\left[
\begin{tabular}
[c]{lll}
1 & 2 & 3\\
4 & 5 & 6
\end{tabular}
\ \right]  ,\ Q=\left[
\begin{tabular}
[c]{lll}
1 & 2 & 5\\
3 & 4 & 6
\end{tabular}
\ \right]
\]
be the two standard tableaux. Then
\[
q_{PQ}=\left[
\begin{tabular}
[c]{lll}
0 & 0 & 1\\
0 & 0 & 1
\end{tabular}
\ \right]  .
\]

Let $\mathcal{E}_{P}$ denotes the set of all pedestals $q_{PQ}.$ The correspondence
$$Q\to q_{PQ} \in \mathcal{E}_{P}$$ is a one-to-one map, as explained below.

Clearly,
there is a map $\mathcal{E}_{P}\rightarrow\mathcal{E},$ which to every
pedestal $q_{PQ}$ corresponds its `discrete derivative' $\varepsilon_{PQ}$.

\vskip.2cm The pedestals were introduced in \cite{S} in the following context.
Consider the set $\mathcal{P}=\mathcal{P}_{X}$ of all non-negative
integer-valued non-decreasing functions $p$ on $X.$ Denote by $v\left(
p\right)  $ the `volume' of $p:$
\[
v\left(  p\right)  =\sum_{\alpha\in X}p\left(  \alpha\right)  ,
\]
and let $G$ be the following generating function:
\[
G_{X}\left(  t\right)  =\sum_{k\geq0}g_{k}t^{k}=\sum_{p\in\mathcal{P}_{X}
}t^{v\left(  p\right)  },
\]
i.e. $g_{k}$ is the number of non-decreasing $p$-s with $v\left(  p\right)
=k.$ For example, if the poset $X$ is in fact the set $X_{n}=\left[
1,2,...,n\right]  ,$ ordered linearly, then
\[
G_{X_{n}}\left(  t\right)  =\prod_{l=1}^{n}\frac{1}{1-t^{l}}
\]
is the generating function of the sequence $g_{k}$ of the number of partitions
$\pi$ of the integer $k$ into at most $n$ parts: $k=\pi\left(  1\right)
+\pi\left(  2\right)  +...+\pi\left(  n\right)  ,$ with $\pi\left(  i\right)
\geq0,$ $\pi\left(  i\right)  \leq\pi\left(  i+1\right)  .$ Let $\mathcal{Y}
_{n}$ denote the set of all such partitions $\pi$ (i.e. Young diagrams).

\vskip.2cm
In order to write a formula for $G_{X}$ for an arbitrary poset $X$ one needs
pedestals. Namely, let us fix some ordering $P$ of $X,$ consider all pedestals
$q_{PQ},$ and let
\begin{equation}
\Pi_{P}\left(  t\right)  =\sum_{Q}t^{v\left(  q_{PQ}\right)  } \label{12}
\end{equation}
be the generating function (in fact, generating polynomial) of the sequence of
the number of pedestals with a given volume. Then we have the identity:
\begin{equation}
G_{X}\left(  t\right)  =\Pi_{P}\left(  t\right)  G_{X_{n}}\left(  t\right)
\equiv\Pi_{P}\left(  t\right)  \prod_{l=1}^{n}\frac{1}{1-t^{l}}, \label{11}
\end{equation}
(compare with $\left(  \ref{23}\right)  $). In particular, it follows from
$\left(  \ref{11}\right)  $ that the polynomial $\Pi_{P}\left(  t\right)  $
does not depend on $P,$ and thus can be denoted by $\Pi_{X}\left(  t\right)
.$ The reason for $\left(  \ref{11}\right)  $ to hold is the existence of the
bijection $b:\mathcal{P}_{X}\to \mathcal{E}_{P}\times\mathcal{Y}_{n}$
between the set $\mathcal{P}_{X}$ of nondecreasing functions and the
direct product $\mathcal{E}_{P}\times\mathcal{Y}_{n},$ respecting the volumes.
Namely, to each pedestal $q_{PQ}$ and each partition $\pi$ it associates the
following function $p$ on $X:$
\[
p\left(  Q^{-1}\left(  k\right)  \right)  =q_{PQ}\left(  Q^{-1}\left(
k\right)  \right)  +\pi\left(  k\right)  ,\ k=1,...,n.
\]
Clearly, the function thus defined is non-decreasing on $X.$ For the check
that $b$ is a one-to-one correspondence see \cite{S}, relation (46) and the construction of the inverse map $b^{-1}$ there.
The bijectivity of $b$ implies in particular that for each $P$ all the pedestals $q_{PQ}$ are distinct.

\vskip.2cm In the case when $X$ is a (2D) Young diagram, the functions
$p\in\mathcal{P}_{X}$ are called `reverse plane partitions'. The generating
function $G_{X}$ for these is also given by the famous Stanley  \cite{St} formula,
\[
G_{X}\left(  t\right)  =\prod_{\alpha\in X}\frac{1}{1-t^{h\left(
\alpha\right)  }},
\]
where $h\left(  \alpha\right)  $ is the hook length of the cell $\alpha\in X.$
When $X$ is a rectangle, this is the MacMahon
formula.
That means that for the case of $X$ being a Young diagram nice cancellations
happen in the rhs of $\left(  \ref{11}\right)  .$ One can check that for some
$X$ being a 3D Young diagram no cancellations happen in $\left(
\ref{11}\right)  ,$ and this is the reason why the analog of the Stanley
formula in the 3D case does not exist.

\subsection{Pedestal polynomials}

The fact that the function $\Pi_{P}\left(  t\right)  $ (see $\left(
\ref{12}\right)  $) does not depend on the order $P$ on $X,$ but only on $X,$
has the following generalization. Instead of characterizing the pedestal
$q_{PQ}$ just by its volume let us associate with it the monomial
$m_{PQ}\left(  x_{1},x_{2},x_{3},...\right)  =x_{1}^{l_{1}-1}x_{2}
^{l_{2}-l_{1}}...x_{r}^{l_{r}-l_{r-1}}x_{r+1}^{n-l_{r}+1},$ where $r$ is the
number of $\left(  P,Q\right)  $-descent nodes, and $l_{1},...,l_{r}$ are
their locations, see $\left(  \ref{13}\right)  .$ Note that $m_{PQ}\left(
1,t,t^{2},...\right)  =t^{v\left(  q_{PQ}\right)  }.$

\vskip.2cm It was shown in \cite{OS} that the polynomial
\[
\mathfrak{h}_{P}\left(  x_{1},x_{2},x_{3},...\right)  =\sum_{Q\in
\mathsf{{Tot}}_{X}}m_{PQ}\left(  x_{1},x_{2},x_{3},...\right)
\]
is also independent of $P,$ so it can be denoted as $\mathfrak{h}_{X}\left(
x_{1},x_{2},x_{3},...\right)  .$ Another way of expressing this is to say that
the matrix $\tilde{M}^{X}$ of size $\left\vert \mathsf{{Tot}}_{X}\right\vert
\times\left\vert \mathsf{{Tot}}_{X}\right\vert ,$ with entries $\left(
\tilde{M}^{X}\right)  _{PQ}=m_{PQ}\left(  x_{1},x_{2},x_{3},...\right)  $ is
\textit{stochastic}, i.e. the vector $\left(  1,1,...,1\right)  $ is the right
eigenvector, with the eigenvalue $\mathfrak{h}_{X}\left(  x_{1},x_{2}
,x_{3},...\right)  .$

\vskip.2cm
By replacing the monomials $m_{PQ}\left(  x_{1},x_{2},x_{3},...\right)  $ with
variables $a_{\varepsilon_{PQ}}$ one obtains from $\tilde{M}^{X}$ our matrix
$M^{X}.$

\begin{remark}
As we just said, we know from \cite{OS} that the rows of the matrix $M^{X}$
consist of the same matrix elements, permuted. So it is tempting to consider
the set of permutations $\pi_{PP^{\prime}}\in S_{\left\vert \mathsf{{Tot}}
_{X}\right\vert },$ which permute the elements of the row $P$ to these of row
$P^{\prime}.$ Unfortunately, rows of the matrix $M^{X}$ can contain repeated
elements, so the permutations $\pi_{PP^{\prime}}$ are not uniquely defined. \end{remark}

\subsection{Examples}

Here we present several examples in which our posets $X$ correspond to
partitions;  we first list the linear orders, that is, the standard Young tableaux of a given shape, and then present the
pedestal matrix with lines and columns labelled by the standard Young tableaux in the listed order.

\vskip.2cm \textbf{0.} In all examples we considered the pedestal matrix is diagonalisable in the
generic point. However for special values of variables the pedestal matrix might have non-trivial
Jordan blocks.  We give a minimal example - partition (3,1). It is essentially the same example as
the one before the main theorem, with the pedestal matrix (\ref{par31}), because the box (1,1)  comes first in any linear order and can be omitted.

\vskip .4cm
\noindent Here it is enough to take a partial evaluation $a_{10}\mapsto -2a_{01}$. Then the Jordan form is

\[ \left(\begin{array}{ccc}a_{00}-a_{01}&1&0\\0&a_{00}-a_{01}&0\\0&0&a_{00}+2a_{01}
\end{array}\right)\ .\]

\vskip .6cm
It would be interesting to understand the regimes in which the pedestal matrix is not diagonalisable.

\paragraph{1. Partition (3,2).}  The standard tableaux are

\vskip .8cm
\begin{tikzpicture}[scale=.6]
\draw (0,0) --(3,0);\draw (0,-1) --(3,-1);
\draw (0,0) --(0,-2);\draw (1,0) --(1,-2);\draw (2,0) --(2,-2);
\draw (0,-2) --(2,-2);\draw (3,0) --(3,-1);
\node at ( .5, -.5) {1};;\node at (1 .5, -.5) {3};\node at ( 2.5, -.5) {5};
\node at ( .5, -1.5) {2};\node at (1 .5, -1.5) {4};

\draw (0+4,0) --(3+4,0);\draw (+4,-1) --(3+4,-1);
\draw (+4,0) --(+4,-2);\draw (1+4,0) --(1+4,-2);\draw (2+4,0) --(2+4,-2);
\draw (+4,-2) --(2+4,-2);\draw (3+4,0) --(3+4,-1);
\node at ( .5+4, -.5) {1};;\node at (1 .5+4, -.5) {2};\node at ( 2.5+4, -.5) {5};
\node at ( .5+4, -1.5) {3};\node at (1 .5+4, -1.5) {4};

\draw (0+8,0) --(3+8,0);\draw (+8,-1) --(3+8,-1);
\draw (+8,0) --(+8,-2);\draw (1+8,0) --(1+8,-2);\draw (2+8,0) --(2+8,-2);
\draw (+8,-2) --(2+8,-2);\draw (3+8,0) --(3+8,-1);
\node at ( .5+8, -.5) {1};;\node at (1 .5+8, -.5) {3};\node at ( 2.5+8, -.5) {4};
\node at ( .5+8, -1.5) {2};\node at (1 .5+8, -1.5) {5};

\draw (0+12,0) --(3+12,0);\draw (+12,-1) --(3+12,-1);
\draw (+12,0) --(+12,-2);\draw (1+12,0) --(1+12,-2);\draw (2+12,0) --(2+12,-2);
\draw (+12,-2) --(2+12,-2);\draw (3+12,0) --(3+12,-1);
\node at ( .5+12, -.5) {1};;\node at (1 .5+12, -.5) {2};\node at ( 2.5+12, -.5) {4};
\node at ( .5+12, -1.5) {3};\node at (1 .5+12, -1.5) {5};

\draw (0+16,0) --(3+16,0);\draw (+16,-1) --(3+16,-1);
\draw (+16,0) --(+16,-2);\draw (1+16,0) --(1+16,-2);\draw (2+16,0) --(2+16,-2);
\draw (+16,-2) --(2+16,-2);\draw (3+16,0) --(3+16,-1);
\node at ( .5+16, -.5) {1};;\node at (1 .5+16, -.5) {2};\node at ( 2.5+16, -.5) {3};
\node at ( .5+16, -1.5) {4};\node at (1 .5+16, -1.5) {5};

\node at ( 3.44, -1.2) {,};\node at ( 3.44+4, -1.2) {,};\node at ( 3.44+8, -1.2) {,};\node at ( 3.44+12, -1.2) {,};
\node at ( 3.44+16, -1.2) {.};
\end{tikzpicture}

\vskip .6cm
The pedestal matrix $\tilde{M}^{X}$ is $x_{1}^{2}A_{(3,2)},$ where

\[
A_{(3,2)}=\left(
\begin{array}
[c]{ccccc}
x_{1}^{3} & x_{2}^{3} & x_{1}^{2}x_{2} & x_{2}^{2}x_{3} & x_{1}x_{2}^{2}\\
x_{2}^{3} & x_{1}^{3} & x_{2}^{2}x_{3} & x_{1}^{2}x_{2} & x_{1}x_{2}^{2}\\
x_{1}^{2}x_{2} & x_{2}^{2}x_{3} & x_{1}^{3} & x_{2}^{3} & x_{1}x_{2}^{2}\\
x_{2}^{2}x_{3} & x_{1}^{2}x_{2} & x_{2}^{3} & x_{1}^{3} & x_{1}x_{2}^{2}\\
x_{2}^{2}x_{3} & x_{1}^{2}x_{2} & x_{2}^{3} & x_{1}x_{2}^{2} & x_{1}^{3}
\end{array}
\right)  \ .
\]

\vskip .1cm
\noindent  After a replacement
\begin{equation}
\phi:(x_{1}^{3},x_{1}^{2}x_{2},x_{1}x_{2}^{2},x_{2}^{3},x_{2}^{2}
x_{3})\rightarrow(a_{1},a_{2},a_{3},a_{4},a_{5})\ , \label{repl32}
\end{equation}

\vskip .1cm\noindent
we have

\[
A_{(3,2)}^{\phi}=\left(
\begin{array}
[c]{ccccc}
a_{1} & a_{4} & a_{2} & a_{5} & a_{3}\\
a_{4} & a_{1} & a_{5} & a_{2} & a_{3}\\
a_{2} & a_{5} & a_{1} & a_{4} & a_{3}\\
a_{5} & a_{2} & a_{4} & a_{1} & a_{3}\\
a_{5} & a_{2} & a_{4} & a_{3} & a_{1}
\end{array}
\right)  \ .
\]
The eigenvalues of $A_{(3,2)}^{\phi}$ are
\[
a_{1}-a_{3}\ ,\ a_{1}+a_{2}-a_{4}-a_{5}\ ,\ a_{1}-a_{2}+a_{4}-a_{5}
\ ,\ a_{1}-a_{2}-a_{4}+a_{5}\ ,\ a_{1}+a_{2}+a_{3}+a_{4}+a_{5}\ .
\]

\paragraph{2. Partition (3,1,1).} The standard tableaux are

\vskip .8cm
\begin{tikzpicture}[scale=.6]
\draw (0,0) --(3,0);\draw (0,-1) --(3,-1);\draw (0,-2) --(1,-2);\draw (0,-3) --(1,-3);
\draw (0,0) --(0,-3);\draw (1,0) --(1,-3);\draw (2,0) --(2,-1);
\draw (3,0) --(3,-1);
\node at ( .5, -.5) {1};\node at (1 .5, -.5) {4};\node at ( 2.5, -.5) {5};
\node at ( .5, -1.5) {2};\node at (0 .5, -2.5) {3};

\draw (0+4,0) --(3+4,0);\draw (0+4,-1) --(3+4,-1);\draw (0+4,-2) --(1+4,-2);\draw (0+4,-3) --(1+4,-3);
\draw (0+4,0) --(0+4,-3);\draw (1+4,0) --(1+4,-3);\draw (2+4,0) --(2+4,-1);
\draw (3+4,0) --(3+4,-1);
\node at ( .5+4, -.5) {1};;\node at (1 .5+4, -.5) {3};\node at ( 2.5+4, -.5) {5};
\node at ( .5+4, -1.5) {2};\node at (0 .5+4, -2.5) {4};

\draw (0+8,0) --(3+8,0);\draw (0+8,-1) --(3+8,-1);\draw (0+8,-2) --(1+8,-2);\draw (0+8,-3) --(1+8,-3);
\draw (0+8,0) --(0+8,-3);\draw (1+8,0) --(1+8,-3);\draw (2+8,0) --(2+8,-1);
\draw (3+8,0) --(3+8,-1);
\node at ( .5+8, -.5) {1};;\node at (1 .5+8, -.5) {2};\node at ( 2.5+8, -.5) {5};
\node at ( .5+8, -1.5) {3};\node at (0 .5+8, -2.5) {4};

\draw (0+12,0) --(3+12,0);\draw (0+12,-1) --(3+12,-1);\draw (0+12,-2) --(1+12,-2);\draw (0+12,-3) --(1+12,-3);
\draw (0+12,0) --(0+12,-3);\draw (1+12,0) --(1+12,-3);\draw (2+12,0) --(2+12,-1);
\draw (3+12,0) --(3+12,-1);
\node at ( .5+12, -.5) {1};;\node at (1 .5+12, -.5) {3};\node at ( 2.5+12, -.5) {4};
\node at ( .5+12, -1.5) {2};\node at (0 .5+12, -2.5) {5};

\node at ( 3.4, -1.2) {,};\node at ( 3.4+4, -1.2) {,};\node at ( 3.4+8, -1.2) {,};\node at ( 3.4+12, -1.2) {,};
\end{tikzpicture}

\vskip .8cm
\begin{tikzpicture}[scale=.6]

\draw (0+16,0) --(3+16,0);\draw (0+16,-1) --(3+16,-1);\draw (0+16,-2) --(1+16,-2);\draw (0+16,-3) --(1+16,-3);
\draw (0+16,0) --(0+16,-3);\draw (1+16,0) --(1+16,-3);\draw (2+16,0) --(2+16,-1);
\draw (3+16,0) --(3+16,-1);
\node at ( .5+16, -.5) {1};;\node at (1 .5+16, -.5) {2};\node at ( 2.5+16, -.5) {4};
\node at ( .5+16, -1.5) {3};\node at (0 .5+16, -2.5) {5};

\draw (0+20,0) --(3+20,0);\draw (0+20,-1) --(3+20,-1);\draw (0+20,-2) --(1+20,-2);\draw (0+20,-3) --(1+20,-3);
\draw (0+20,0) --(0+20,-3);\draw (1+20,0) --(1+20,-3);\draw (2+20,0) --(2+20,-1);
\draw (3+20,0) --(3+20,-1);
\node at ( .5+20, -.5) {1};;\node at (1 .5+20, -.5) {2};\node at ( 2.5+20, -.5) {3};
\node at ( .5+20, -1.5) {4};\node at (0 .5+20, -2.5) {5};

\node at ( 3.4+16, -1.2) {,};\node at ( 3.4+20, -1.2) {.};
\end{tikzpicture}

\vskip .6cm
The pedestal matrix is $x_{1}^{2}A_{(3,1,1)}$ where

\[
A_{(3,1,1)}=\left(
\begin{array}
[c]{cccccc}
x_{1}^{3} & x_{1}x_{2}^{2} & x_{2}^{3} & x_{1}^{2}x_{2} & x_{2}^{2}x_{3} &
x_{1}x_{2}^{2}\\
x_{1}x_{2}^{2} & x_{1}^{3} & x_{2}^{3} & x_{1}^{2}x_{2} & x_{2}^{2}x_{3} &
x_{1}x_{2}^{2}\\
x_{1}x_{2}^{2} & x_{2}^{3} & x_{1}^{3} & x_{2}^{2}x_{3} & x_{1}^{2}x_{2}^{2} &
x_{1}x_{2}^{2}\\
x_{1}x_{2}^{2} & x_{1}^{2}x_{2} & x_{2}^{2}x_{3} & x_{1}^{3} & x_{2}^{3} &
x_{1}x_{2}^{2}\\
x_{1}x_{2}^{2} & x_{2}^{2}x_{3} & x_{1}^{2}x_{2} & x_{2}^{3} & x_{1}^{3} &
x_{1}x_{2}^{2}\\
x_{1}x_{2}^{2} & x_{2}^{2}x_{3} & x_{1}^{2}x_{2} & x_{2}^{3} & x_{1}x_{2}^{2}
& x_{1}^{3}
\end{array}
\right)  \ .
\]

\vskip .1cm\noindent After the same replacement (\ref{repl32}) (the matrix $A_{(3,1,1)}$ contains
the same monomials as the matrix $A_{(3,2)}$) we have

\[
A_{(3,1,1)}^{\phi}=\left(
\begin{array}
[c]{cccccc}
a_{1} & a_{3} & a_{4} & a_{2} & a_{5} & a_{3}\\
a_{3} & a_{1} & a_{4} & a_{2} & a_{5} & a_{3}\\
a_{3} & a_{4} & a_{1} & a_{5} & a_{2} & a_{3}\\
a_{3} & a_{2} & a_{5} & a_{1} & a_{4} & a_{3}\\
a_{3} & a_{5} & a_{2} & a_{4} & a_{1} & a_{3}\\
a_{3} & a_{5} & a_{2} & a_{4} & a_{3} & a_{1}
\end{array}
\right)  \ .
\]

\vskip .1cm\noindent  The eigenvalues of $A_{(3,1,1)}^{\phi}$ are (the notation $(y)_{k}$ means that
the multiplicity of the eigenvalue $y$ is $k$)
\[
(a_{1}-a_{3})_{2}\ ,\ a_{1}+a_{2}-a_{4}-a_{5}\ ,\ a_{1}-a_{2}+a_{4}
-a_{5}\ ,\ a_{1}-a_{2}-a_{4}+a_{5}\ ,\ a_{1}+a_{2}+2a_{3}+a_{4}+a_{5}\ .
\]

The example $(3,1,1)$ shows degeneration: the letter $a_{3}$ appears twice in
every row of $A_{(3,1,1)}^{\phi}$. The corresponding monomial is $x_{1}
^{3}x_{2}^{2}$ so for writing down the decomposition of the matrix $B_{a_{3}}$
we need filters from $\mathcal{F}_{3,2}$. There are three of them in
$\mathcal{F}_{3,2}$ (the notation is like for a matrix; element $(i,j)$ is in
the intersection of row $i$ and column $j$):

\begin{itemize}
\item[$\bullet$] $F_{1}$: Floor 1 contains cells (1,1), (1,2) and (2,1);

\item[$\bullet$] $F_{2}$: Floor 1 contains cells (1,1), (1,2) and (1,3);

\item[$\bullet$] $F_{3}$: Floor 1 contains cells (1,1), (2,1) and (3,1).
\end{itemize}

The matrices of action of these filters on the linear orders are

\[
M_{F_{1}}=\left(
\begin{array}
[c]{cccccc}
0 & 1 & 0 & 0 & 0 & 0\\
0 & 1 & 0 & 0 & 0 & 0\\
0 & 0 & 1 & 0 & 0 & 0\\
0 & 0 & 0 & 1 & 0 & 0\\
0 & 0 & 0 & 0 & 1 & 0\\
0 & 0 & 0 & 0 & 1 & 0
\end{array}
\right)  ,M_{F_{2}}=\left(
\begin{array}
[c]{cccccc}
0 & 0 & 0 & 0 & 0 & 1\\
0 & 0 & 0 & 0 & 0 & 1\\
0 & 0 & 0 & 0 & 0 & 1\\
0 & 0 & 0 & 0 & 0 & 1\\
0 & 0 & 0 & 0 & 0 & 1\\
0 & 0 & 0 & 0 & 0 & 1
\end{array}
\right)  ,
\]

\vskip .2cm
\[
M_{F_{3}}=\left(
\begin{array}
[c]{cccccc}
1 & 0 & 0 & 0 & 0 & 0\\
1 & 0 & 0 & 0 & 0 & 0\\
1 & 0 & 0 & 0 & 0 & 0\\
1 & 0 & 0 & 0 & 0 & 0\\
1 & 0 & 0 & 0 & 0 & 0\\
1 & 0 & 0 & 0 & 0 & 0
\end{array}
\right)  .
\]

\vskip .2cm\noindent  The family $\mathcal{F}_{5}$ contains one filter, which acts as the identity
$I$. The matrix $B_{a_{3}}$ is thus

\[
B_{a_{3}}=\left(
\begin{array}
[c]{cccccc}
0 & 1 & 0 & 0 & 0 & 1\\
1 & 0 & 0 & 0 & 0 & 1\\
1 & 0 & 0 & 0 & 0 & 1\\
1 & 0 & 0 & 0 & 0 & 1\\
1 & 0 & 0 & 0 & 0 & 1\\
1 & 0 & 0 & 0 & 1 & 0
\end{array}
\right)  =M_{F_{1}}+M_{F_{2}}+M_{F_{3}}-I\ ,
\]

\vskip .2cm\noindent as dictated by the inclusion-exclusion formula.

\paragraph{3. Partition (3,2,1).} In this example, to save the space, we write down the pedestal matrix in which the replacement
\[
(x_{1}^{6},x_{1}^{5}x_{2},x_{1}^{4}x_{2}^{2},x_{1}^{4}x_{2}x_{3},x_{1}
^{3}x_{2}^{3},x_{1}^{3}x_{2}^{2}x_{3},x_{1}^{2}x_{2}^{4},x_{1}^{2}x_{2}
^{3}x_{3},x_{1}^{2}x_{2}^{2}x_{3}^{2},x_{1}^{2}x_{2}^{2}x_{3}x_{4})\rightarrow
\]
\[
(a_{1},a_{2},a_{3},a_{4},a_{5},a_{6},a_{7},a_{8},a_{9},a_{10})
\]
is already made.

The standard tableaux are

\vskip .6cm
\begin{tikzpicture}[scale=.6]
\draw (0,0) --(3,0);\draw (0,-1) --(3,-1);\draw (0,-2) --(2,-2);\draw (0,-3) --(1,-3);
\draw (0,0) --(0,-3);\draw (1,0) --(1,-3);\draw (2,0) --(2,-2);
\draw (3,0) --(3,-1);
\node at ( .5, -.5) {1};\node at (1 .5, -.5) {4};\node at ( 2.5, -.5) {6};
\node at ( .5, -1.5) {2};\node at (1 .5, -1.5) {5};\node at (0 .5, -2.5) {3};

\draw (0+5,0) --(3+5,0);\draw (0+5,-1) --(3+5,-1);\draw (0+5,-2) --(2+5,-2);\draw (0+5,-3) --(1+5,-3);
\draw (0+5,0) --(0+5,-3);\draw (1+5,0) --(1+5,-3);\draw (2+5,0) --(2+5,-2);
\draw (3+5,0) --(3+5,-1);
\node at ( .5+5, -.5) {1};\node at (1 .5+5, -.5) {3};\node at ( 2.5+5, -.5) {6};
\node at ( .5+5, -1.5) {2};\node at (1 .5+5, -1.5) {5};\node at (0 .5+5, -2.5) {4};

\draw (0+10,0) --(3+10,0);\draw (0+10,-1) --(3+10,-1);\draw (0+10,-2) --(2+10,-2);\draw (0+10,-3) --(1+10,-3);
\draw (0+10,0) --(0+10,-3);\draw (1+10,0) --(1+10,-3);\draw (2+10,0) --(2+10,-2);
\draw (3+10,0) --(3+10,-1);
\node at ( .5+10, -.5) {1};\node at (1 .5+10, -.5) {2};\node at ( 2.5+10, -.5) {6};
\node at ( .5+10, -1.5) {3};\node at (1 .5+10, -1.5) {5};\node at (0 .5+10, -2.5) {4};

\draw (0+15,0) --(3+15,0);\draw (0+15,-1) --(3+15,-1);\draw (0+15,-2) --(2+15,-2);\draw (0+15,-3) --(1+15,-3);
\draw (0+15,0) --(0+15,-3);\draw (1+15,0) --(1+15,-3);\draw (2+15,0) --(2+15,-2);
\draw (3+15,0) --(3+15,-1);
\node at ( .5+15, -.5) {1};\node at (1 .5+15, -.5) {3};\node at ( 2.5+15, -.5) {6};
\node at ( .5+15, -1.5) {2};\node at (1 .5+15, -1.5) {4};\node at (0 .5+15, -2.5) {5};

\node at ( 3.9, -1.2) {,};\node at ( 3.4+5.5, -1.2) {,};\node at ( 3.4+10.5, -1.2) {,};\node at ( 3.4+15.5, -1.2) {,};

\end{tikzpicture}

\vskip .5cm
\begin{tikzpicture}[scale=.6]
\draw (0,0) --(3,0);\draw (0,-1) --(3,-1);\draw (0,-2) --(2,-2);\draw (0,-3) --(1,-3);
\draw (0,0) --(0,-3);\draw (1,0) --(1,-3);\draw (2,0) --(2,-2);
\draw (3,0) --(3,-1);
\node at ( .5, -.5) {1};\node at (1 .5, -.5) {2};\node at ( 2.5, -.5) {6};
\node at ( .5, -1.5) {3};\node at (1 .5, -1.5) {4};\node at (0 .5, -2.5) {5};

\draw (0+5,0) --(3+5,0);\draw (0+5,-1) --(3+5,-1);\draw (0+5,-2) --(2+5,-2);\draw (0+5,-3) --(1+5,-3);
\draw (0+5,0) --(0+5,-3);\draw (1+5,0) --(1+5,-3);\draw (2+5,0) --(2+5,-2);
\draw (3+5,0) --(3+5,-1);
\node at ( .5+5, -.5) {1};\node at (1 .5+5, -.5) {4};\node at ( 2.5+5, -.5) {5};
\node at ( .5+5, -1.5) {2};\node at (1 .5+5, -1.5) {6};\node at (0 .5+5, -2.5) {3};

\draw (0+10,0) --(3+10,0);\draw (0+10,-1) --(3+10,-1);\draw (0+10,-2) --(2+10,-2);\draw (0+10,-3) --(1+10,-3);
\draw (0+10,0) --(0+10,-3);\draw (1+10,0) --(1+10,-3);\draw (2+10,0) --(2+10,-2);
\draw (3+10,0) --(3+10,-1);
\node at ( .5+10, -.5) {1};\node at (1 .5+10, -.5) {3};\node at ( 2.5+10, -.5) {5};
\node at ( .5+10, -1.5) {2};\node at (1 .5+10, -1.5) {6};\node at (0 .5+10, -2.5) {4};

\draw (0+15,0) --(3+15,0);\draw (0+15,-1) --(3+15,-1);\draw (0+15,-2) --(2+15,-2);\draw (0+15,-3) --(1+15,-3);
\draw (0+15,0) --(0+15,-3);\draw (1+15,0) --(1+15,-3);\draw (2+15,0) --(2+15,-2);
\draw (3+15,0) --(3+15,-1);
\node at ( .5+15, -.5) {1};\node at (1 .5+15, -.5) {2};\node at ( 2.5+15, -.5) {5};
\node at ( .5+15, -1.5) {3};\node at (1 .5+15, -1.5) {6};\node at (0 .5+15, -2.5) {4};

\node at ( 3.9, -1.2) {,};\node at ( 3.4+5.5, -1.2) {,};\node at ( 3.4+10.5, -1.2) {,};\node at ( 3.4+15.5, -1.2) {,};

\end{tikzpicture}

\vskip .5cm
\begin{tikzpicture}[scale=.6]
\draw (0,0) --(3,0);\draw (0,-1) --(3,-1);\draw (0,-2) --(2,-2);\draw (0,-3) --(1,-3);
\draw (0,0) --(0,-3);\draw (1,0) --(1,-3);\draw (2,0) --(2,-2);
\draw (3,0) --(3,-1);
\node at ( .5, -.5) {1};\node at (1 .5, -.5) {3};\node at ( 2.5, -.5) {4};
\node at ( .5, -1.5) {2};\node at (1 .5, -1.5) {6};\node at (0 .5, -2.5) {5};

\draw (0+5,0) --(3+5,0);\draw (0+5,-1) --(3+5,-1);\draw (0+5,-2) --(2+5,-2);\draw (0+5,-3) --(1+5,-3);
\draw (0+5,0) --(0+5,-3);\draw (1+5,0) --(1+5,-3);\draw (2+5,0) --(2+5,-2);
\draw (3+5,0) --(3+5,-1);
\node at ( .5+5, -.5) {1};\node at (1 .5+5, -.5) {2};\node at ( 2.5+5, -.5) {4};
\node at ( .5+5, -1.5) {3};\node at (1 .5+5, -1.5) {6};\node at (0 .5+5, -2.5) {5};

\draw (0+10,0) --(3+10,0);\draw (0+10,-1) --(3+10,-1);\draw (0+10,-2) --(2+10,-2);\draw (0+10,-3) --(1+10,-3);
\draw (0+10,0) --(0+10,-3);\draw (1+10,0) --(1+10,-3);\draw (2+10,0) --(2+10,-2);
\draw (3+10,0) --(3+10,-1);
\node at ( .5+10, -.5) {1};\node at (1 .5+10, -.5) {2};\node at ( 2.5+10, -.5) {3};
\node at ( .5+10, -1.5) {4};\node at (1 .5+10, -1.5) {6};\node at (0 .5+10, -2.5) {5};

\draw (0+15,0) --(3+15,0);\draw (0+15,-1) --(3+15,-1);\draw (0+15,-2) --(2+15,-2);\draw (0+15,-3) --(1+15,-3);
\draw (0+15,0) --(0+15,-3);\draw (1+15,0) --(1+15,-3);\draw (2+15,0) --(2+15,-2);
\draw (3+15,0) --(3+15,-1);
\node at ( .5+15, -.5) {1};\node at (1 .5+15, -.5) {3};\node at ( 2.5+15, -.5) {5};
\node at ( .5+15, -1.5) {2};\node at (1 .5+15, -1.5) {4};\node at (0 .5+15, -2.5) {6};

\node at ( 3.9, -1.2) {,};\node at ( 3.4+5.5, -1.2) {,};\node at ( 3.4+10.5, -1.2) {,};\node at ( 3.4+15.5, -1.2) {,};

\end{tikzpicture}

\vskip .5cm
\begin{tikzpicture}[scale=.6]
\draw (0,0) --(3,0);\draw (0,-1) --(3,-1);\draw (0,-2) --(2,-2);\draw (0,-3) --(1,-3);
\draw (0,0) --(0,-3);\draw (1,0) --(1,-3);\draw (2,0) --(2,-2);
\draw (3,0) --(3,-1);
\node at ( .5, -.5) {1};\node at (1 .5, -.5) {2};\node at ( 2.5, -.5) {5};
\node at ( .5, -1.5) {3};\node at (1 .5, -1.5) {4};\node at (0 .5, -2.5) {6};

\draw (0+5,0) --(3+5,0);\draw (0+5,-1) --(3+5,-1);\draw (0+5,-2) --(2+5,-2);\draw (0+5,-3) --(1+5,-3);
\draw (0+5,0) --(0+5,-3);\draw (1+5,0) --(1+5,-3);\draw (2+5,0) --(2+5,-2);
\draw (3+5,0) --(3+5,-1);
\node at ( .5+5, -.5) {1};\node at (1 .5+5, -.5) {3};\node at ( 2.5+5, -.5) {4};
\node at ( .5+5, -1.5) {2};\node at (1 .5+5, -1.5) {5};\node at (0 .5+5, -2.5) {6};

\draw (0+10,0) --(3+10,0);\draw (0+10,-1) --(3+10,-1);\draw (0+10,-2) --(2+10,-2);\draw (0+10,-3) --(1+10,-3);
\draw (0+10,0) --(0+10,-3);\draw (1+10,0) --(1+10,-3);\draw (2+10,0) --(2+10,-2);
\draw (3+10,0) --(3+10,-1);
\node at ( .5+10, -.5) {1};\node at (1 .5+10, -.5) {2};\node at ( 2.5+10, -.5) {4};
\node at ( .5+10, -1.5) {3};\node at (1 .5+10, -1.5) {5};\node at (0 .5+10, -2.5) {6};

\draw (0+15,0) --(3+15,0);\draw (0+15,-1) --(3+15,-1);\draw (0+15,-2) --(2+15,-2);\draw (0+15,-3) --(1+15,-3);
\draw (0+15,0) --(0+15,-3);\draw (1+15,0) --(1+15,-3);\draw (2+15,0) --(2+15,-2);
\draw (3+15,0) --(3+15,-1);
\node at ( .5+15, -.5) {1};\node at (1 .5+15, -.5) {2};\node at ( 2.5+15, -.5) {3};
\node at ( .5+15, -1.5) {4};\node at (1 .5+15, -1.5) {5};\node at (0 .5+15, -2.5) {6};

\node at ( 3.9, -1.2) {,};\node at ( 3.4+5.5, -1.2) {,};\node at ( 3.4+10.5, -1.2) {,};\node at ( 3.4+15.5, -1.2) {.};

\end{tikzpicture}

\vskip .7cm
The matrix $A_{(3,2,1)}^{\phi}$ is
\[
A_{(3,2,1)}^{\phi}=\left(
\begin{array}
[c]{cccccccccccccccc}
\!a_{1}\! & \!a_{5}\! & \!a_{7}\! & \!a_{3}\! & \!a_{9}\! & \!a_{2}\! &
\!a_{6}\! & \!a_{8}\! & \!a_{3}\! & \!a_{9}\! & \!a_{5}\! & \!a_{2}\! &
\!a_{8}\! & \!a_{4}\! & \!a_{10}\! & \!a_{6}\!\\
\!a_{5}\! & \!a_{1}\! & \!a_{7}\! & \!a_{3}\! & \!a_{9}\! & \!a_{6}\! &
\!a_{2}\! & \!a_{8}\! & \!a_{3}\! & \!a_{9}\! & \!a_{5}\! & \!a_{2}\! &
\!a_{8}\! & \!a_{4}\! & \!a_{10}\! & \!a_{6}\!\\
\!a_{5}\! & \!a_{7}\! & \!a_{1}\! & \!a_{9}\! & \!a_{3}\! & \!a_{6}\! &
\!a_{8}\! & \!a_{2}\! & \!a_{9}\! & \!a_{3}\! & \!a_{5}\! & \!a_{8}\! &
\!a_{2}\! & \!a_{10}\! & \!a_{4}\! & \!a_{6}\!\\
\!a_{5}\! & \!a_{3}\! & \!a_{9}\! & \!a_{1}\! & \!a_{7}\! & \!a_{6}\! &
\!a_{2}\! & \!a_{8}\! & \!a_{4}\! & \!a_{10}\! & \!a_{6}\! & \!a_{2}\! &
\!a_{8}\! & \!a_{3}\! & \!a_{9}\! & \!a_{5}\!\\
\!a_{5}\! & \!a_{9}\! & \!a_{3}\! & \!a_{7}\! & \!a_{1}\! & \!a_{6}\! &
\!a_{8}\! & \!a_{2}\! & \!a_{10}\! & \!a_{4}\! & \!a_{6}\! & \!a_{8}\! &
\!a_{2}\! & \!a_{9}\! & \!a_{3}\! & \!a_{5}\!\\
\!a_{2}\! & \!a_{6}\! & \!a_{8}\! & \!a_{3}\! & \!a_{9}\! & \!a_{1}\! &
\!a_{5}\! & \!a_{7}\! & \!a_{3}\! & \!a_{9}\! & \!a_{5}\! & \!a_{4}\! &
\!a_{10}\! & \!a_{2}\! & \!a_{8}\! & \!a_{6}\!\\
\!a_{6}\! & \!a_{2}\! & \!a_{8}\! & \!a_{3}\! & \!a_{9}\! & \!a_{5}\! &
\!a_{1}\! & \!a_{7}\! & \!a_{3}\! & \!a_{9}\! & \!a_{5}\! & \!a_{4}\! &
\!a_{10}\! & \!a_{2}\! & \!a_{8}\! & \!a_{6}\!\\
\!a_{6}\! & \!a_{8}\! & \!a_{2}\! & \!a_{9}\! & \!a_{3}\! & \!a_{5}\! &
\!a_{7}\! & \!a_{1}\! & \!a_{9}\! & \!a_{3}\! & \!a_{5}\! & \!a_{10}\! &
\!a_{4}\! & \!a_{8}\! & \!a_{2}\! & \!a_{6}\!\\
\!a_{6}\! & \!a_{2}\! & \!a_{8}\! & \!a_{4}\! & \!a_{10}\! & \!a_{5}\! &
\!a_{3}\! & \!a_{9}\! & \!a_{1}\! & \!a_{7}\! & \!a_{5}\! & \!a_{3}\! &
\!a_{9}\! & \!a_{2}\! & \!a_{8}\! & \!a_{6}\!\\
\!a_{6}\! & \!a_{8}\! & \!a_{2}\! & \!a_{10}\! & \!a_{4}\! & \!a_{5}\! &
\!a_{9}\! & \!a_{3}\! & \!a_{7}\! & \!a_{1}\! & \!a_{5}\! & \!a_{9}\! &
\!a_{3}\! & \!a_{8}\! & \!a_{2}\! & \!a_{6}\!\\
\!a_{6}\! & \!a_{8}\! & \!a_{2}\! & \!a_{10}\! & \!a_{4}\! & \!a_{5}\! &
\!a_{9}\! & \!a_{3}\! & \!a_{7}\! & \!a_{5}\! & \!a_{1}\! & \!a_{9}\! &
\!a_{3}\! & \!a_{8}\! & \!a_{6}\! & \!a_{2}\!\\
\!a_{5}\! & \!a_{3}\! & \!a_{9}\! & \!a_{2}\! & \!a_{8}\! & \!a_{6}\! &
\!a_{4}\! & \!a_{10}\! & \!a_{2}\! & \!a_{8}\! & \!a_{6}\! & \!a_{1}\! &
\!a_{7}\! & \!a_{3}\! & \!a_{9}\! & \!a_{5}\!\\
\!a_{5}\! & \!a_{9}\! & \!a_{3}\! & \!a_{8}\! & \!a_{2}\! & \!a_{6}\! &
\!a_{10}\! & \!a_{4}\! & \!a_{8}\! & \!a_{2}\! & \!a_{6}\! & \!a_{7}\! &
\!a_{1}\! & \!a_{9}\! & \!a_{3}\! & \!a_{5}\!\\
\!a_{6}\! & \!a_{4}\! & \!a_{10}\! & \!a_{2}\! & \!a_{8}\! & \!a_{5}\! &
\!a_{3}\! & \!a_{9}\! & \!a_{2}\! & \!a_{8}\! & \!a_{6}\! & \!a_{3}\! &
\!a_{9}\! & \!a_{1}\! & \!a_{7}\! & \!a_{5}\!\\
\!a_{6}\! & \!a_{10}\! & \!a_{4}\! & \!a_{8}\! & \!a_{2}\! & \!a_{5}\! &
\!a_{9}\! & \!a_{3}\! & \!a_{9}\! & \!a_{2}\! & \!a_{6}\! & \!a_{9}\! &
\!a_{3}\! & \!a_{7}\! & \!a_{1}\! & \!a_{5}\!\\
\!a_{6}\! & \!a_{10}\! & \!a_{4}\! & \!a_{8}\! & \!a_{2}\! & \!a_{5}\! &
\!a_{9}\! & \!a_{3}\! & \!a_{8}\! & \!a_{6}\! & \!a_{2}\! & \!a_{9}\! &
\!a_{3}\! & \!a_{7}\! & \!a_{5}\! & \!a_{1}\!
\end{array}
\right)  \ .
\]
The eigenvalues of $A_{(3,2,1)}^{\phi}$ are
\[
(a_{1}-a_{4}-a_{7}+a_{10})_{3}\ ,\ a_{1}-a_{4}+a_{7}-a_{10}\ ,\ (a_{1}
+a_{2}-a_{5}-a_{6})_{2}\ ,\ (a_{1}-a_{2}-a_{5}+a_{6})_{2}\ ,
\]
\[
(a_{1}-a_{2}-a_{3}+a_{4}+a_{7}-a_{8}-a_{9}+a_{10})_{2}\ ,\ (a_{1}-a_{2}
-a_{3}+a_{4}-a_{7}+a_{8}+a_{9}-a_{10})_{2}\ ,
\]
\[
(a_{1}-a_{4}+a_{5}-a_{6}+a_{7}-a_{10})_{2}\ ,\ a_{1}+2a_{2}+2a_{3}+a_{4}
-a_{7}-2a_{8}-2a_{9}-a_{10}\ ,
\]
\[
a_{1}+2a_{2}+2a_{3}+2a_{5}+2a_{6}+a_{7}+2a_{8}+2a_{9}+a_{10}\ .
\]

\vskip .4cm
\textbf{Acknowledgements.} Part of this work was done in IHES during the program dedicated to the
100th anniversary of the Ising model, in May - June of 2022. R.K., M.K. and S.S. are grateful to the organizers
of the program for the uplifting atmosphere.
R.K. was supported by NSF grant DMS-1940932 and the Simons Foundation grant 327929.
The work of W.S. was supported by the NSF grant DMS-2101491
and by the Sloan Research Fellowship. The work of S.S. was supported by the RSF under project 23-11-00150.

We thank Professor P. Diaconis and Professor R. Stanley for valuable remarks.


\begin{thebibliography}{999}


\bibitem[BHR]{BHR}Bidigare P., Hanlon P.  and Rockmore D., {\it A combinatorial
description of the spectrum for the Tsetlin library and its generalization to
hyperplane arrangements}, Duke Math. Journal, 1999, Jul; 99(1):135-74.

\bibitem[B]{B}Brown K.S., {\it Semigroups, rings, and Markov chains}, Journal of
Theoretical Probability, 2000, Jul; 13:871-938.

\bibitem[BD]{BD}Brown K.S. and Diaconis P., {\it  Random walks and hyperplane
arrangements}, Annals of Probability, 1998, Oct; 1:1813-54.

\bibitem[D]{D}Dorpalen-Barry, G.A., {\it Cones of hyperplane arrangements}
(Doctoral dissertation, 2021, University of Minnesota).

\bibitem[OS]{OS}Ogievetsky O. and Shlosman S., {\it Plane Partitions and Their Pedestal
Polynomials}, Mathematical Notes, 2018, May; 103(5):793-6.

\bibitem[Sa]{Sa}Saliola F.V., {\it The face semigroup algebra of a hyperplane
arrangement}, Canadian Journal of Mathematics, 2009, Aug; 61(4):904-29.

\bibitem[S]{S}Shlosman S., {\it The Wulff construction in statistical mechanics and
combinatorics}, Russian Mathematical Surveys, 56(4), p.709-38, 2001.

\bibitem[Sl]{Sl} Sloane N.J.A., {\it The on-line encyclopedia of integer sequences}, http://www.oeis.org/.

\bibitem[St]{St}Stanley R., {\it Enumerative Combinatorics}, Cambridge University
Press, New York, NY, USA, 2 edition, 2012.
\end{thebibliography}
\end{document}